\newcommand{\qb}{\bm{q}}
\newcommand{\bb}{\bm{b}}
\newcommand{\cb}{\bm{c}}
\newcommand{\zb}{\bm{z}}
\newcommand{\xb}{\bm{x}}
\newcommand{\yb}{\bm{y}}
\newcommand{\Ab}{\bm{A}}
\newcommand{\Bb}{\bm{B}}
\newcommand{\Cb}{\bm{C}}
\newcommand{\db}{\bm{d}}
\newcommand{\mub}{\bm{\mu}}
\newcommand{\lambdab}{\bm{\lambda}}
\newcommand{\betab}{\bm{\beta}}
\newcommand{\tilxi}{\bm{\tilde{\xi}}}
\newcommand{\tilxiN}{\tilde{\xi}_{\text{\tiny N}}}
\newcommand{\tilxiE}{\tilde{\xi}_{\text{\tiny E}}}
\newcommand{\Eb}{\mathbb{E}}
\newcommand{\Pb}{\mathbb{P}}
\newcommand{\zzg}{z^{\text{\tiny g}}} 
\newcommand{\zza}{z^{\text{\tiny a}}} 
\newcommand{\ffp}{f^{\text{\tiny p}}} 
\newcommand{\ffq}{f^{\text{\tiny q}}} 
\newcommand{\UBgp}{\overline{g}^{\text{\tiny p}}} 
\newcommand{\UBgq}{\overline{g}^{\text{\tiny q}}} 
\newcommand{\LBgp}{\underline{g}^{\text{\tiny p}}} 
\newcommand{\LBgq}{\underline{g}^{\text{\tiny q}}} 
\newcommand{\ddp}{d^{\text{\tiny p}}} 
\newcommand{\ddq}{d^{\text{\tiny q}}} 
\newcommand{\lpp}{l^{\text{\tiny p+}}} 
\newcommand{\lpm}{l^{\text{\tiny p--}}} 
\newcommand{\lqp}{l^{\text{\tiny q+}}} 
\newcommand{\lqm}{l^{\text{\tiny q--}}} 
\newtheorem{theorem}{Theorem}[section]
\newtheorem{remark}[theorem]{Remark}
\newtheorem{proposition}[theorem]{Proposition}
\newtheorem{corollary}[theorem]{Corollary}
\newtheorem{lemma}[theorem]{Lemma}
\begin{document}
\title{Mitigating the Impacts of Uncertain Geomagnetic Disturbances on Electric Grids: A Distributionally Robust Optimization Approach}

\author{Minseok~Ryu \IEEEmembership{Member, IEEE},
        Harsha~Nagarajan \IEEEmembership{Member, IEEE},
        and~Russell~Bent \IEEEmembership{Member, IEEE}
\thanks{M. Ryu is with the Mathematics and Computer Science Division, Argonne National Laboratory, Lemont, IL, USA (Contact: mryu@anl.gov).}
\thanks{H. Nagarajan and R. Bent are with the Applied Mathematics and Plasma Physics group (T-5), Los Alamos National Laboratory, Los Alamos, NM, USA (Contact: \{harsha, rbent\}@lanl.gov).}
}

\maketitle

\begin{abstract}
Severe geomagnetic disturbances (GMDs) increase the magnitude of the electric field on the Earth's surface (E-field) and drive geomagnetically-induced currents (GICs) along the transmission lines in electric grids.
These additional currents can pose severe risks, such as current distortions, transformer saturation and increased reactive power losses, each of which can lead to system unreliability. Several mitigation actions (e.g., changing grid topology) exist that can reduce the harmful GIC effects on the grids. Making such decisions can be challenging, however, because the magnitude and direction of the E-field are uncertain and non-stationary. In this paper, we model uncertain E-fields using the distributionally robust optimization (DRO) approach that determines optimal transmission grid operations such that the worst-case expectation of the system cost is minimized. We also capture the effect of GICs on the nonlinear AC power flow equations. For solution approaches, we develop an \textit{accelerated} column-and-constraint generation (CCG) algorithm by exploiting a special structure of the support set of uncertain parameters representing the E-field. Extensive numerical experiments based on ``epri-21'' and ``uiuc-150'' systems, designed for GMD studies, demonstrate (i) the computational performance of the accelerated CCG algorithm, {\color{black} (ii) the superior performance of distributionally robust grid operations that satisfy nonlinear, nonconvex AC power flow equations and GIC constraints, in comparison with standard stochastic programming-based methods during the out-of-sample testing.}
\end{abstract}

\begin{IEEEkeywords}
Geomagnetic disturbance, E-field, Distributionally robust optimization, AC power flow 
\end{IEEEkeywords}

%
\IEEEpeerreviewmaketitle

\section{Introduction}
\label{sec:intro}
Geomagnetic disturbances (GMDs) refer to changes in the geomagnetic field of the Earth that are typically caused by space weather environments.
GMDs induce an electric field (E-field) on the Earth's surface that can be estimated through Earth conductivity models \cite{meqbel2014deep}.
Figure \ref{fig:E-field} shows the time-varying magnitude and direction of the E-field in the United States.

\begin{figure}[!h]
    \centering
    \includegraphics[scale=0.144]{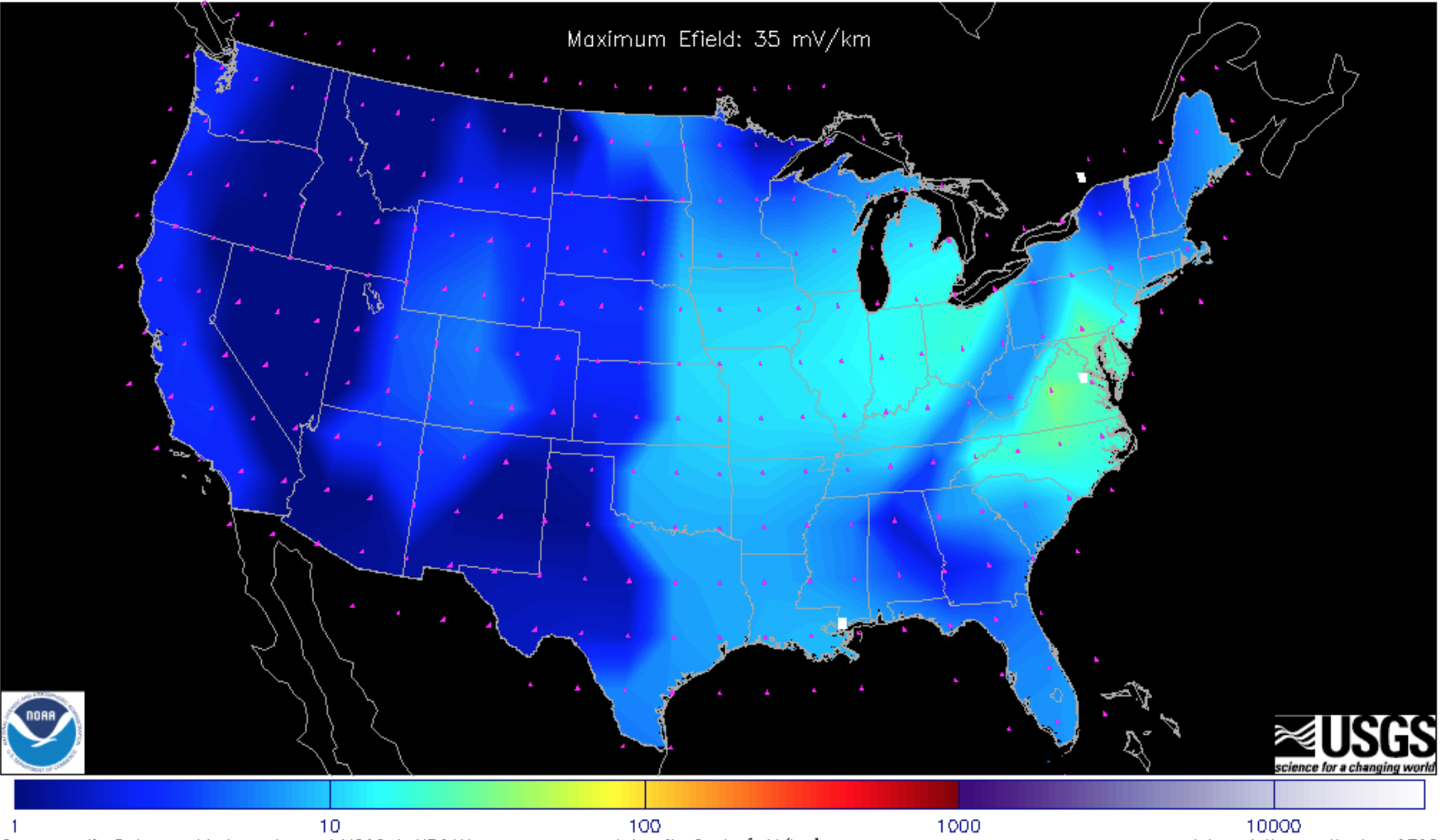}
    \includegraphics[scale=0.138]{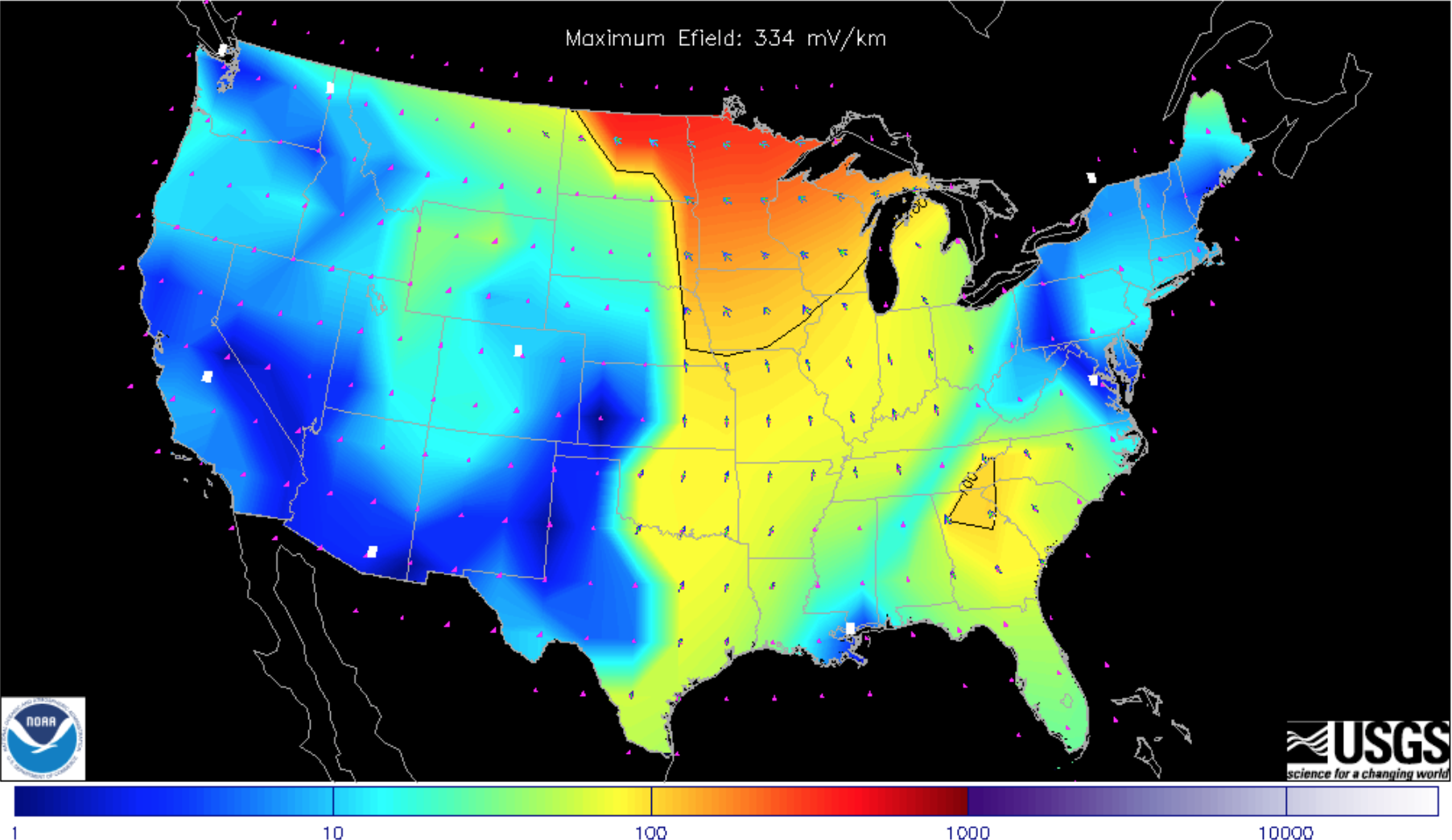}
    \caption{Magnitude and direction of the E-field in the United States \cite{Efield}.}
    \label{fig:E-field}
\end{figure}

When the magnitude of the E-field increases during severe GMDs (e.g., by intense solar storms), it induces significant amounts of geomagnetically-induced currents (GICs) along transmission lines in electric power grids.
These GICs cause partial saturation of power transformers and produce transformer heating, induce distortion of the AC waveform currents, increase reactive power losses, and create unreliability in transmission grids. For example, in 1989, the Hydro-Qu\'ebec power system experienced an $8$ V/km of the E-field, which led to 9 hours of blackout and a net loss of $\$13.2$M \cite{bolduc2002gic}. Similar events occurred in 2003 in Scandinavia and South Africa \cite{gannon2019geomagnetically}.

{\color{black}
Even though GMD events are rare in comparison with other more common uncertain events (e.g., contingencies and renewable energy), it has been identified as a ``high-impact, low-frequency" event causing risk to the power systems as stated in the report from the U.S. Department of Energy \cite{NERC2010}. For this reason, numerous efforts have been made to mitigate the negative impacts of GMDs on the power grid,} such as
(i) improving the Earth conductivity models to estimate the E-field accurately \cite{kelbert2017geoelectric},
(ii) mathematically modeling GICs to calculate the additional reactive power losses due to GICs \cite{north2013application}, and
(iii) establishing mitigation actions that 
protect power grids from severe GICs.
Since this paper belongs to category (iii), we summarize the literature belonging to this category. 

\noindent
\textbf{Literature review}
Several mitigation actions can reduce the harmful GIC effects on the electric transmission grids.
First, installing direct current (DC) blocking devices at regional substations can prevent GICs, which is quasi-DC, from entering the power network through transformer neutrals \cite{bolduc2005development, zhu2014blocking}.
Second, changing the network topology by deactivating a few transmission components (e.g., transmission lines, transformers, and generators) can prevent damage due to GICs. In \cite{lu2017optimal}, the authors proposed an optimal transmission line switching model under GMDs based on alternating current (AC) power flow equations and a set of constraints that captures GIC effects on various types of transformers. 
Utilizing convex relaxations, the researchers formulated the problem as a mixed-integer quadratic convex program.
Later, in \cite{kazerooni2018transformer}, the authors presented heuristic algorithms to mitigate the effect of GICs on transformers by using line switching strategies. \\ 
\indent
Making an optimal \textit{switching decision}, in other words, deciding which transmission components are switched off is important, but challenging because the magnitude and direction of the E-field are \textit{uncertain} when making such a decision, and non-stationary (i.e., changing over time) after making such a decision. Moreover, since GMDs are \textit{low-frequency, high-impact} events, the limited historical data makes it difficult to estimate the underlying distribution of the uncertain event.
One way to address these challenges is to formulate the problem using a distributionally robust optimization (DRO) approach. The following is an example of the DRO model:
\begin{align*}
\min_{x \in \mathcal{X}} \ \Big\{ c^{\text{\tiny T}} x + \sup_{\mathbb{P} \in \mathcal{D}} \mathbb{E}_{\mathbb{P}} [\mathcal{Q}(x,\tilde{\xi})] \Big\},
\end{align*}
where $\mathcal{X}$ is a feasible region of the first-stage problem; $\mathcal{Q}(x,\tilde{\xi})$ is the optimal value of the second-stage problem, which is solved after the uncertain parameter $\tilde{\xi}$ is realized; and
$\mathcal{D}$ is an ambiguity set, that is, a collection of probability distributions of $\tilde{\xi}$ supported on the set $\Xi$.
The DRO model finds a solution $x$ that optimizes the expected outcome $\mathbb{E}_{\mathbb{P}} [\mathcal{Q}(x,\tilde{\xi})]$ over the worst-case probability distribution $\mathbb{P}$ within the ambiguity set $\mathcal{D}$. (A detailed connection of this DRO model with respect to uncertain E-fields will be presented in later sections.) 
The DRO approach has been particularly beneficial in quantifying uncertainty and hence has been adopted by the power system community \cite{zhang2016distributionally, zhao2017distributionally, duan2018distributionally}.
This paper provides a unified DRO-based framework, extends an unpublished technical report~\cite{lu2019distributionally}, and builds on the recent work of \cite{ryu2020pscc}. While the technical report in \cite{lu2019distributionally} observed the value of modeling uncertain GMDs using the DRO approach, it used \textit{linearized} power flow and GIC constraints with nonconvex objectives in the second-stage problem of the two-stage approach, leading to prohibitively long run times on a small test instance. 
In \cite{ryu2020pscc}, we noticed that shedding some loads in advance can mitigate the negative GIC effects, and we proposed a new DRO model that provided switching decisions and total load shedding that minimizes the worst-case expected costs of damage by GICs.
This model consisted of a mixed-integer second-order cone program (MISOCP) in the first-stage and linear second-stage problems, where the MISOCP relaxation of nonconvex AC power flow equations with line switching \cite{kocuk2017new} and linearized GIC constraints were utilized.
The solution approach for the DRO model in \cite{ryu2020pscc} relied on a column-and-constraint generation (CCG) algorithm \cite{zeng2013solving} that was applied on a small-scale instance. This is a standard algorithm for solving two-stage DRO problems, which does not exploit any special structures in the ambiguity sets. Another major drawback of approaches in \cite{lu2019distributionally,ryu2020pscc} is that the optimal solutions involving MISOCP relaxations of the AC power flow may not be feasible to the original nonconvex problem, thus providing only a lower bound on the actual cost of mitigation actions, and making the solutions not of practical interest. However, guaranteeing nonconvex AC feasible solutions in the DRO setting is significantly challenging from both a theoretical and a computational perspective. \\
\indent
In this paper, we address this gap by developing an approach for finding practically viable \textit{transmission grid operations}--- in other words, switching decisions and \textit{feasible} AC and GIC power flow with control of load shedding, that hedge against the worst-case probability distribution of an uncertain E-field.
To this end, we propose a DRO model that is composed of AC power flow equations with switching binary variables at the first stage, and \textit{nonlinear} GIC constraints that calculate the additional reactive power losses due to GICs at the second stage. 
The \textit{contributions} of this paper are as follows: 
{\color{black}
\begin{itemize}
\item[1.] We provide an efficient unifying two-stage DRO-based framework for the problem of mitigating effects of uncertain GMDs on electric grids. To this end, we identify a special structure of the support set of the uncertain E-field that enables us to reformulate the relaxed DRO model in \cite{ryu2020pscc} as a single-stage MISOCP.
\item[2.] Utilizing the MISOCP reformulation, we develop an \textit{accelerated} CCG algorithm for the relaxed DRO model to scale on medium-scale benchmark instances. 
\item[3.] Given switching decisions, we utilize the accelerated CCG algorithm to solve the DRO model proposed in this paper via a sampling-based approach to obtain AC and GIC feasible power flows. We further establish a clear value proposition of the proposed DRO model in comparison with classical stochastic programming, where out-of-sample methods suggest the superior performance of DRO method on test networks.  
\end{itemize}
}
{\color{black} Although we acknowledge the importance of modeling other sources of uncertainty such as contingencies \cite{babaei2020distributionally} and renewable energy \cite{xiong2016distributionally} in the DRO setting, in this paper we only focus on uncertainties due to GMD events in order to keep the modeling and computational aspects of the problem tractable.}

In the remainder of this paper, we describe the DRO models in Section \ref{sec:math_form}, their corresponding solution approaches in Section \ref{sec:methodoloties}, and numerical experiments in Section \ref{sec:numerical}. We summarize our conclusions in Section \ref{sec:conclusion}.\\
\noindent
\textbf{Notation:} 
We use boldfaces notation to denote vectors.
We define $[m] :=\{1,\ldots,m\}$ for any $m \in \mathbb{N}$, and $[x]_+=\max\{x, 0\}$.

\section{Mathematical Formulation}
\label{sec:math_form}
In this section we propose a DRO model that is composed of two stages:
(i) the first-stage problem, built on AC power flow equations with binary variables, is solved under uncertain magnitude and direction of the E-field, and
(ii) the second-stage problem, built on the nonlinear GIC constraints, is solved given the first-stage solution and the realized magnitude and direction of the E-field.

{\color{black}Since GICs are quasi-DC flows, the DC flow analysis is used to calculate GICs as in \cite{albertson1981load, overbye2012integration}, which are described in Sections \ref{subsec:ACDC} and \ref{subsec:GIC}.}
In Section \ref{subsec:Ambiguity} we describe an ambiguity set, which is a set of probability distributions of the uncertain E-field. 
Then we describe the proposed DRO model in Section \ref{subsec:2stage}.
Table \ref{tab-definition} shows nomenclature used in this paper.

\begin{table}[h!]
    \footnotesize
    \centering
    \caption{Nomenclature}
    \label{tab-definition}    
    \begin{tabular}{l||l}
		\hline
		\multicolumn{2}{c}{\textbf{Sets and parameters}} \\ \hline
		$\mathcal{G},\ \mathcal{N}, \ \mathcal{E}$ & set of generators, buses, and lines in AC network \\
		$\mathcal{E}^{\tau} \subseteq \mathcal{E} $ & set of transformers \\
		$\mathcal{E}_i \subseteq \mathcal{E}$ & set of lines connected to $i \in \mathcal{N}$ \\
		$c^{\text{\tiny F0}}_k$ & fixed cost when switching on $k \in \mathcal{G}$\\
		$c^{\text{\tiny F1}}_k, c^{\text{\tiny F2}}_k$ & fuel cost coefficients of power generation of $k \in \mathcal{G}$ \\
		$\LBgp_k, \UBgp_k$ & bounds on the real power generation of $k \in \mathcal{G}$ \\
		$\LBgq_k, \UBgq_k$ & bounds on the reactive power generation of $k \in \mathcal{G}$ \\
		$\kappa^{\text{\tiny l}}$ & unit penalty cost for power unbalance at $i \in \mathcal{N}$ \\
		$\ddp_i, \ddq_i$ & real and reactive power demand at $i \in \mathcal{N}$ \\
		$\underline{v}_i, \overline{v}_i$ & voltage limits at $i \in \mathcal{N}$ \\
		$g^{\text{\tiny s}}_i, b^{\text{\tiny s}}_i$ & shunt conductance and susceptance at $i \in \mathcal{N}$ \\
		$g_e, b_e$ & conductance, susceptance of $e \in \mathcal{E}$ \\
        $b_e^{\text{\tiny c}}$ & line charging susceptance of  $e \in \mathcal{E}$ \\		
		$\overline{s}_e$ & apparent power limit of line $e \in \mathcal{E}$\\
		$\underline{\theta}_{ij}, \overline{\theta}_{ij}$ & bounds on the phase angle difference $\theta_i-\theta_j$ at $(i,j) \in \mathcal{E}$ \\
		$\alpha_{ij}$ & tap ratio of $e_{ij} \in \mathcal{E}$\\
		$k_e$ & loss factor of transformer $e \in \mathcal{E}^{\tau}$\\
		$\overline{I}^{\text{\tiny eff}}_e$ & upper limit of the effective GICs on $e \in \mathcal{E}^{\tau}$\\
		\hline 
		$\mathcal{N}^{\text{\tiny d}}, \mathcal{E}^{\text{\tiny d}}$ & set of nodes and arcs in DC network \\
		$\mathcal{E}^{\text{\tiny d}-}_m, \mathcal{E}^{\text{\tiny d}+}_m$ & set of incoming and outgoing arcs connected to $m \in \mathcal{N}^{\text{\tiny d}}$ \\
		$\gamma_{\ell}$ & conductance of $\ell \in \mathcal{E}^{\text{\tiny d}}$ \\
		$a_m$ & inverse of ground resistance at $m \in \mathcal{N}^{\text{\tiny d}}$  \\
		$\overline{v}^{\text{\tiny d}}$ & bound on the GIC-induced voltage magnitude \\
		$\tilde{\xi}_{\ell} $ & (random) GIC-induced voltage sources on $\ell \in \mathcal{E}^{\text{\tiny d}}$ \\ 
		\hline
		\multicolumn{2}{c}{\textbf{Variables}} \\ \hline
		$\zza_e \in \mathbb{B}$ & $\zza_e=1$ if $e \in \mathcal{E}$ is turned on, and $\zza_e=0$ otherwise  \\
		$\zzg_k \in \mathbb{B}$ & $\zzg_k=1$ if $k \in \mathcal{G}$ is turned on, and $\zzg_k=0$ otherwise  \\		
		$v_i$ & voltage magnitude at $i \in \mathcal{N}$ \\
		$\theta_{ij}$ & phase angle difference ($\theta_{i}-\theta_{j}$) on $e_{ij} \in \mathcal{E}$ \\
		$\ffp_k, \ \ffq_k$ & real and reactive power generated by $k \in \mathcal{G}$\\
		$p_{ei}, \ p_{ej}$ & real power flow on $e_{ij} \in \mathcal{E}$ \textit{from node} $i$ and \textit{to node} $j$ \\
		$q_{ei}, \ q_{ej}$ & reactive power flow on $e_{ij} \in \mathcal{E}$ \textit{from node} $i$ and \textit{to node} $j$ \\		
		$ \lpp_i, \lqp_i$ & real and reactive load shedding at $i \in \mathcal{N}$ \\
		$ \lpm_i, \lqm_i$ & real and reactive power loss at $i \in \mathcal{N}$ \\		
		$d^{\text{\tiny qloss}}_i$ & additional reactive power loss due to GICs at $i \in \mathcal{N}$ \\		
		\hline 
		$I^{\text{\tiny d}}_{\ell}$ & GICs that flow on $\ell \in \mathcal{E}^{\text{\tiny d}}$ \\
        $I^{\text{\tiny eff}}_e$ & effective GICs on $e \in \mathcal{E}^{\tau}$ \\				
        $v^{\text{\tiny d}}_m$ & GIC-induced voltage magnitude at $m \in \mathcal{N}^{\text{\tiny d}}$  \\		
		\hline
	\end{tabular}
\end{table}
\subsection{Topological mapping of AC and DC power network}
\label{subsec:ACDC}
We use the standard representation of the AC power network, represented by a graph $(\mathcal{N}, \mathcal{E})$, where $\mathcal{N}$ is the set of buses and $\mathcal{E}$ is the set including both transmission lines and transformers ($\mathcal{E}^{\tau} \subset \mathcal{E}$). 
For evaluating GICs (quasi-DC flow), we construct a corresponding DC power network $(\mathcal{N}^{\text{\tiny d}}, \mathcal{E}^{\text{\tiny d}})$, where $\mathcal{N}^{\text{\tiny d}}$ includes buses in $\mathcal{N}$ and additional nodes that model the neutral grounding points of transformers.
The set $\mathcal{E}^{\text{\tiny d}}$ includes transmission lines in $\mathcal{E}$ and additional lines between the end points of transformers and their neutrals. In this paper, we consider three transformer configurations, Gwye-Gwye, GWye-GWye Auto, and GWye-Delta GSU, whose descriptions can be found in \cite{lu2017optimal,ryu2020pscc}. 

Figure \ref{fig:ACDC} shows the topological mapping between the AC and DC network representations modeled in this paper. In the AC network, $\mathcal{N} = \{3,4,15,16,17,18\}$, $\mathcal{E}=\{8,12,14,18,28,30\}$, and $\mathcal{E}^{\tau}=\{18,28,30\}$. The DC network $(\mathcal{N}^{\text{\tiny d}}, \mathcal{E}^{\text{\tiny d}})$ is constructed by modeling different types of transformers and their connection to grounding neutrals, $\{G_1,G_2,G_3\}$. The set $\mathcal{N}$ is relabeled with $\{3^d,4^d,15^d,16^d,17^d,18^d\}$ in the DC network. 
To link the two networks, we define $E$ and $E^{-1}$ that map $\ell \in \mathcal{E}^{\text{\tiny d}}$ to an edge $e \in \mathcal{E}$ and vice versa; that is, if $E_{\ell}=e$, then $E^{-1}_e = \{ \ell \in \mathcal{E}^{\text{\tiny d}} : E_{\ell} = e  \}$. 
For example, line $17$ in the DC network maps to transformer line $18$ in the AC network; thus $E_{17}=18$. Transformer line $18$ maps to lines $16$ and $17$ in the DC network; thus $E^{-1}_{18} = \{16, 17\}$. {\color{black} Finally, in the DC representation of the network, transmission lines with series capacitive compensation are omitted as series capacitors can block the flow of induced GICs.}
\begin{figure}[!h]
    \centering
    \includegraphics[scale=0.45]{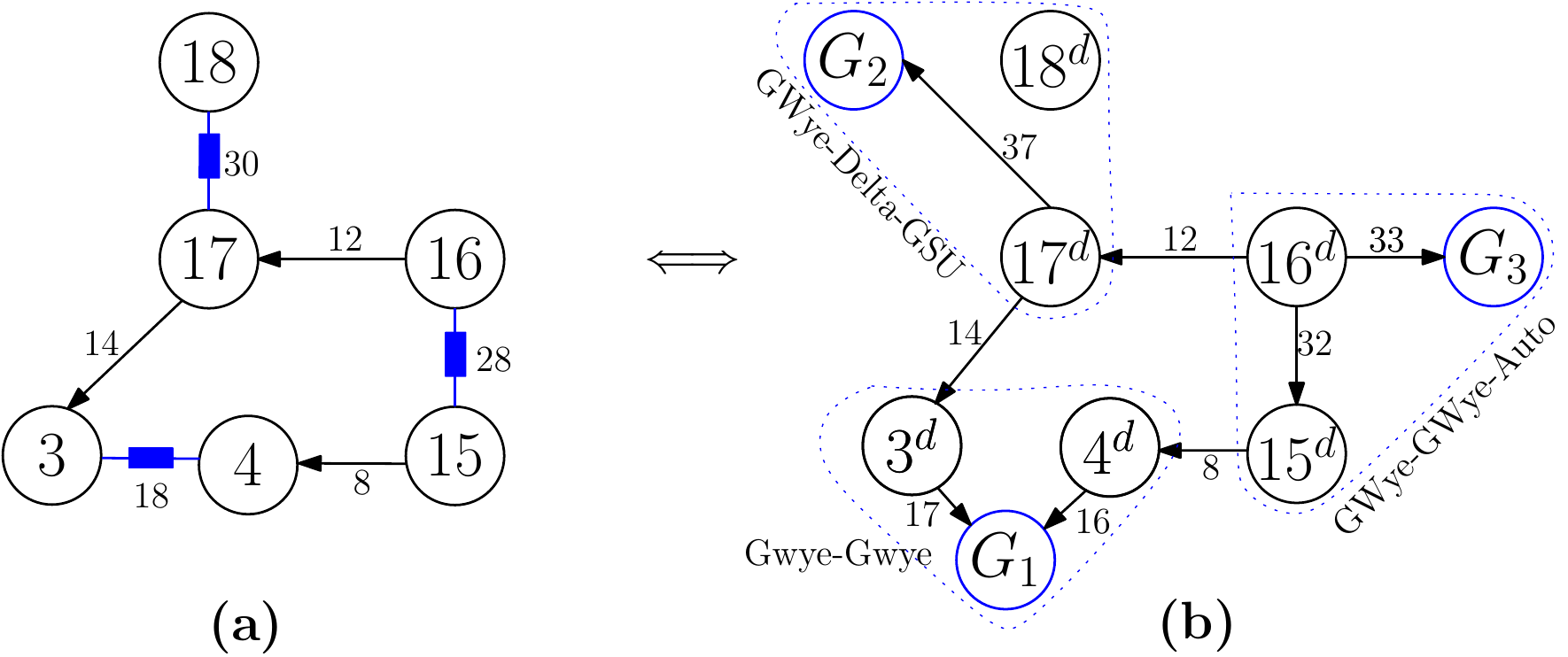}
    \caption{(a) Example of an AC power network (with three transformer types). (b) Equivalent DC network map.}
    \label{fig:ACDC}
    \vspace{-0.5cm}
\end{figure}

\subsection{Modeling of GICs and reactive power losses} \label{subsec:GIC}
{\color{black} To model the GICs, we assume that the main phase of the geomagnetic storm is produced by a ring current in the radiation belt above the Earth’s surface and produces large-scale disturbances that can be reasonably assumed as a spatially uniform E-field across the electric grid \cite{boteler1998modelling}. This is also consistent with the modeling assumptions of \cite{overbye2012integration, horton2012test}.}

{\color{black} Since E-field measurements are usually recorded in the eastward (horizontal) and northward (vertical) directions \cite{team2016benchmark}}, 
let $\tilxiE$, $L^{\text{\tiny E}}_{\ell}$ and $\tilxiN, L^{\text{\tiny N}}_{\ell}$ be E-fields (V/km) and lengths (km) of transmission lines in the eastward and northward direction, respectively. {\color{black} The GIC-induced voltage source, $\tilde{\xi}_{\ell}$, acting on a transmission line, $\ell$}, is given by 
\begin{align*}
\tilde{\xi}_{\ell} = 
\begin{cases}
\tilxiE L^{\text{\tiny E}}_{\ell} + \tilxiN L^{\text{\tiny N}}_{\ell}, \ \forall \ell \in \mathcal{E}^{\text{\tiny d}} : E_{\ell} \in \mathcal{E} \setminus \mathcal{E}^{\tau}, \\ 
0, \ \forall \ell \in \mathcal{E}^{\text{\tiny d}} : E_{\ell} \in \mathcal{E}^{\tau}.
\end{cases}
\end{align*}
Then, the GIC flow on a line between two nodes $m$ and $n$ is 
\begin{align*}
I^{\text{\tiny d}}_{\ell} = \gamma_{\ell} (v^{\text{\tiny d}}_m - v^{\text{\tiny d}}_n + \tilde{\xi}_{\ell}), \ \forall \ell_{mn} \in \mathcal{E}^{\text{\tiny d}}.
\end{align*}
Based on the GIC in the DC network, the effective GIC of transformers in the AC network is given by:
\begin{align*}
& I^{\text{\tiny eff}}_e = | \Theta(I^{\text{\tiny d}}_{\ell}, \ \forall \ell \in E^{-1}_e )|, \ \forall e \in \mathcal{E}^{\tau},
\end{align*}
where $\Theta(I^{\text{\tiny d}}_{\ell}, \ \forall \ell \in E^{-1}_e )$ is a linear function of GICs ($I^{\text{\tiny d}}_{\ell}$) in various transformer types. The three types of transformer winding configurations and the associated GIC calculation equations are presented in Table \ref{tab-effGIC}.
Note that $(N_h, N_l,N_s, N_c)$ are parameters which indicate the number of turns in the high-side/low-side/series/common winding, respectively. {\color{black} It is noteworthy to mention that most test networks in the literature of GMD studies neglect GSU-type transformers \cite{lu2017optimal}. However, the GSU transformers and the neutral leg ground points provided by them are critical when developing methods to mitigate the impact of GICs on electric grids. Thus, in this paper, we incorporate GSUs, modeled as delta-gywe transformers.} 
\begin{table}[h!]
    \small
    \centering
    \caption{Effective GICs for various transformer types. {\color{black}Here, gwye stands for grounded-wye and GSU for generator step-up transformer which connects the output terminals of generators to the electric grid.}}
    \label{tab-effGIC}    
    \begin{tabular}{l|l|l}
    \toprule	
    Type of transformer $(e)$ & $E^{-1}_e$ 	& $\Theta(I^{\text{\tiny d}}_{\ell}, \ \forall \ell \in E^{-1}_e )$ \\ 
    \hline
    Gwye-Gwye		& $\{h, l\}$  	& $\Theta(I^{\text{\tiny d}}_{h}, I^{\text{\tiny d}}_{l}) = \frac{ N_h I^{\text{\tiny d}}_{h} + N_l I^{\text{\tiny d}}_{l}}{N_h}$ \\
    GWye-GWye-Auto	& $\{s, c\}$  	& $\Theta(I^{\text{\tiny d}}_{s}, I^{\text{\tiny d}}_{c}) = \frac{N_s I^{\text{\tiny d}}_{s} + N_c I^{\text{\tiny d}}_{c}}{N_s+N_c}$	 \\
    GWye-Delta-GSU	& $\{h\}$ &  $\Theta(I^{\text{\tiny d}}_{h}) = I^{\text{\tiny d}}_{h} $ \\ 
    \bottomrule
    \end{tabular}
\end{table}

Given that $\mathcal{E}^{\tau}_i$ is a set of transformers connected to node $i$, the induced reactive power loss \cite{zhu2014blocking} due to GICs at node $i$ in the AC network is given by
\begin{align*}
d^{\text{\tiny qloss}}_i = \sum_{e \in \mathcal{E}^{\tau}_i} k_e v_i I^{\text{\tiny eff}}_e, \ \forall i \in \mathcal{N}.
\end{align*}

\subsection{Ambiguity set description}
\label{subsec:Ambiguity}
{\color{black} Although the E-field is usually recorded in eastward ($\tilxiE$) and northward ($\tilxiN$) directions as a function of time \cite{team2016benchmark}, the actual magnitude and direction of the E-field are uncertain (in the first-stage). Moreover, the correlation between $\tilxiE$ and $\tilxiN$ is unknown.} Generally, there is not sufficient historical information to construct reasonable probability distributions of these uncertain parameters \cite{woodroffe2016latitudinal}. 
Moreover, they are non-stationary during the GMD event.
For these reasons, introducing a specific distribution into an optimization model may lead to biased decisions.

Instead, we construct a set of distributions, or ambiguity set, and find a solution that hedges against the worst-case distribution in the ambiguity set.
Specifically, we construct a mean-support ambiguity set $\mathcal{D}$, which is a set of distributions that have the same mean values and support set:
\begin{align*}
\mathcal{D} := \{ \mathbb{P} \ : \  \mathbb{E}_{\mathbb{P}_{\text{\tiny E}}} [ \tilxiE ] = \mu_{\text{\tiny E}}, \ \  \mathbb{E}_{\mathbb{P}_{\text{\tiny N}}} [ \tilxiN ] = \mu_{\text{\tiny N}}, \ \ \mathbb{P}\{ \tilxi \in \Xi \}=1  \}, 
\end{align*}
where $(\mathbb{P}_{\text{\tiny E}}, \mathbb{P}_{\text{\tiny N}})$ and $(\mu_{\text{\tiny E}}, \mu_{\text{\tiny N}})$ are the marginal distributions and mean values of $(\tilxiE, \tilxiN)$, respectively, and 
$\mu_{\text{\tiny E}} = M \cos \delta^{\mu}$ and $\mu_{\text{\tiny N}} = M \sin \delta^{\mu}$, where $M$ and $\delta^{\mu}$ represent the mean values of random magnitude $\tilde{M}$ and direction $\tilde{\delta}$ of the E-field, respectively.

There are several ways of constructing the support set $\Xi$.
First, one can set bounds on each uncertain parameters, for example, $\tilde{M} \in [0,R]$ and $\tilde{\delta} \in [0^{\circ}, 180^{\circ}]$ in \cite{lu2019distributionally}, where $R$ is set as a maximum magnitude of the E-field.
This type of support set is shown as a half-circle with radius $R$ in Figure \ref{fig:SupportSets}. {\color{black}Note that this type of support set may be considered conservative without correlations on $\tilxiE$ and $\tilxiN$}.
Second, one can construct an approximation polytope with $N$ extreme points as shown in Figure \ref{fig:SupportSets}.
As $N$ increases, this polyhedral support set eventually converges to the half-circle. 
In our previous work \cite{ryu2020pscc}, we numerically showed that a polytope with $5$ extreme points provides solutions that are close to solutions obtained by the half-circle support set. 
Hence, in this paper we focus on polyhedral support sets, namely, $\Xi = \text{conv} \{ (\hat{\xi}_{\text{\tiny E}}^{\ell}, \hat{\xi}_{\text{\tiny N}}^{\ell}), \ \forall \ell \in [N] \}$, where $(\hat{\xi}_{\text{\tiny E}}^{\ell}, \hat{\xi}_{\text{\tiny N}}^{\ell})$ represents an extreme point of $\Xi$, (i.e., red dots in Figure \ref{fig:SupportSets}).
\begin{figure}[h!]
	\centering
	\includegraphics[scale=0.35]{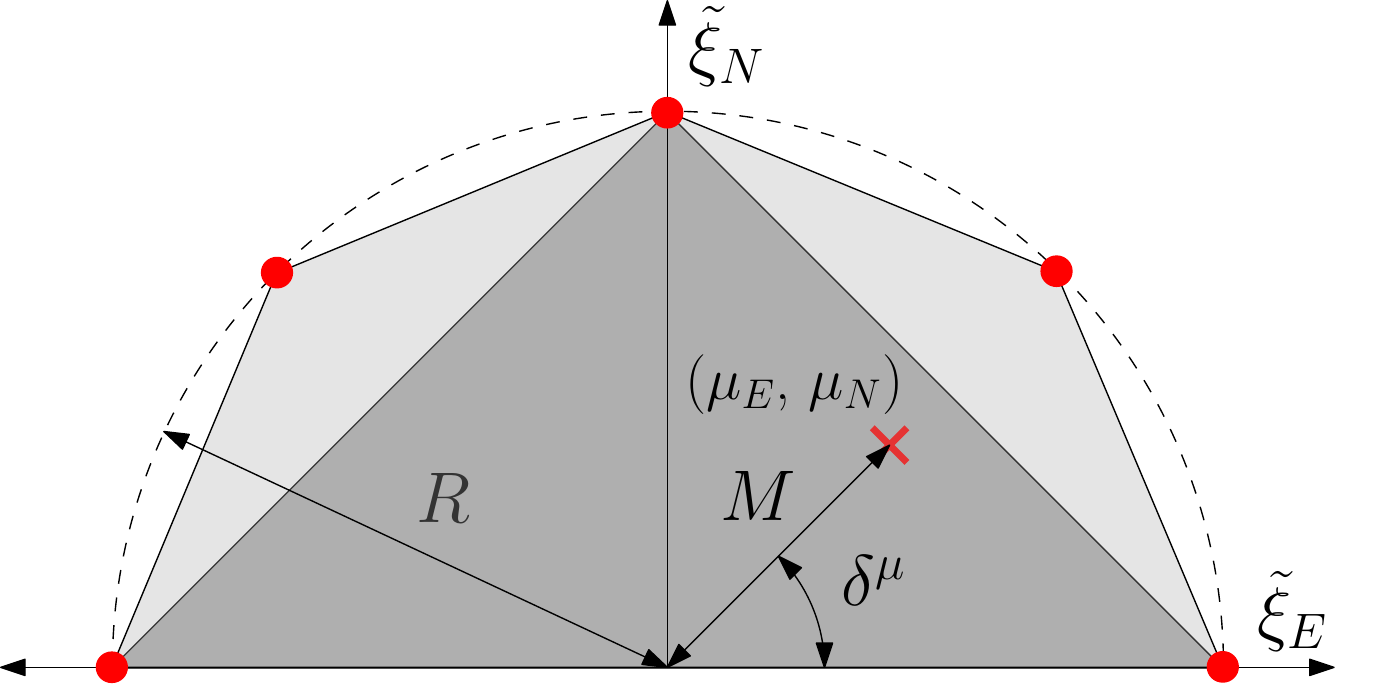}
	\caption{Polyhedral support sets and mean of the E-field $(\tilxiE, \tilxiN)$.}
	\label{fig:SupportSets}
\end{figure}

\subsection{DRO model} \label{subsec:2stage}
In this section, we describe the two-stage DRO model: 
\begin{subequations} 
    \label{DRO:stg1}
	\small{
	\begin{align} 
	\min \ &  \sum_{k \in \mathcal{G}} ( c^{\text{\tiny F0}}_k \zzg_k + c^{\text{\tiny F1}}_k \ffp_k +  c^{\text{\tiny F2}}_k (\ffp_k)^2 ) + \sum_{i \in \mathcal{N}} \kappa^{\text{\tiny l}} ( \lpp_i + \lpm_i + \lqp_i + \lqm_i  ) \nonumber \\ 
	 &\hspace{102pt} + \sup_{\Pb \in \mathcal{D}} \Eb_{\Pb}[ \mathcal{V}(\boldsymbol{{\zza}, v, {d^{\text{\tiny qloss}}}, {\tilde{\xi}}}] \label{DRO:stg1_obj}  \\ 
	\mbox{s.t.} \ 
	& \forall i \in \mathcal{N}: \nonumber \\
	& \ \ \sum_{e \in \mathcal{E}_i } p_{ei} = \sum_{k \in \mathcal{G}_i } \ffp_k - \ddp_i + \lpp_i - \lpm_i - g^{\text{\tiny s}}_i w_i, \label{DRO:stg1_1} \\
	& \ \ \sum_{e \in \mathcal{E}_i } q_{ei} = \sum_{k \in \mathcal{G}_i } \ffq_k - \ddq_i  + \lqp_i - \lqm_i + b^{\text{\tiny s}}_i w_i - d^{\text{\tiny qloss}}_i, \label{DRO:stg1_2} \\
	& \forall e_{ij} \in \mathcal{E}: \nonumber \\
	& \ \ p_{ei} = \zza_e \bigg\{  \frac{1}{\alpha_{ij}^2} g_e w_i - \frac{1}{\alpha_{ij}} (g_e w^{\text{\tiny c}}_e + b_e w^{\text{\tiny s}}_e) \bigg\}, \label{DRO:stg1_3}  \\
	& \ \ p_{ej} =  \zza_e \bigg\{ g_e w_j - \frac{1}{\alpha_{ij}}  (g_e w^{\text{\tiny c}}_e - b_e w^{\text{\tiny s}}_e  ) \bigg\}, \label{DRO:stg1_4} \\
	& \ \ q_{ei} = \zza_e \bigg\{  -\frac{1}{\alpha_{ij}^2} (b_e + \frac{b_e^{\text{\tiny c}}}{2}) w_i + \frac{1}{\alpha_{ij}} (b_e w^{\text{\tiny c}}_e - g_e w^{\text{\tiny s}}_e ) \bigg\}, \label{DRO:stg1_5} \\
	& \ \ q_{ej} =  \zza_e \bigg\{ -(b_e + \frac{b_e^{\text{\tiny c}}}{2}) w_j + \frac{1}{\alpha_{ij}} (b_e w^{\text{\tiny c}}_e + g_e w^{\text{\tiny s}}_e ) \bigg\}, \label{DRO:stg1_6} \\
	& w_i = v_i^2, \ \forall i \in \mathcal{N}, \label{DRO:stg1_7} \\
	&  w^{\text{\tiny c}}_e = v_i v_j \cos (\theta_{ij}),\quad w^{\text{\tiny s}}_e = v_i v_j \sin (\theta_{ij}), \ \forall e_{ij} \in \mathcal{E}, \label{DRO:stg1_9} \\
	& p_{ei}^2 + q_{ei}^2 \leq \zza_e (\overline{s}_e)^2 , \ \ p_{ej}^2 + q_{ej}^2 \leq \zza_e (\overline{s}_e)^2, \ \forall e_{ij} \in \mathcal{E}, \label{DRO:stg1_10} \\
	& v_i \in [\underline{v}_i, \overline{v}_i], \ \forall i \in \mathcal{N},\quad \theta_{ij} \in [\underline{\theta}_{ij}, \overline{\theta}_{ij}], \ \forall e_{ij} \in \mathcal{E}, \label{DRO:stg1_11} \\
	& \LBgp_k \zzg_k \leq \ffp_k \leq \UBgp_k \zzg_k, \quad \LBgq_k \zzg_k \leq \ffq_k \leq \UBgq_k \zzg_k, \ \forall k \in \mathcal{G}, \label{DRO:stg1_13} \\
    & \sum_{e \in \mathcal{E}_k} \zza_e \geq \zzg_k, \  \forall k \in \mathcal{G}, \label{DRO:stg1_14} \\
    & \lpp_i, \lpm_i, \lqp_i, \lqm_i \geq 0, \ \forall i \in \mathcal{N},  \label{DRO:stg1_15} \\	
	& \zza_e \in \{0,1\}, \ \forall e \in \mathcal{E}, \ \ \zzg_k \in \{0,1\}, \ \forall k \in \mathcal{G}. \label{DRO:stg1_16}
	\end{align}}
\end{subequations}
where, given the values of $\boldsymbol{{\zza}, v, {d^{\text{\tiny qloss}}}}$ from the first-stage problem \eqref{DRO:stg1} and the realization of the uncertain E-field $\boldsymbol{\tilde{\xi}}$,  $\mathcal{V}(\boldsymbol{{\zza}, v, {d^{\text{\tiny qloss}}}, {\tilde{\xi}}})$ is the optimal value of the following second-stage problem that evaluates GICs in the DC network:

\begin{subequations}
\small{
\begin{align}
	\min \ & \sum_{i \in \mathcal{N}} \kappa^{\text{\tiny s}} s_i \label{DRO:stg2_obj} \\
	\mbox{s.t.} \ 
    & \sum_{\ell \in \mathcal{E}^{d-}_m} I^{\text{\tiny d}}_{\ell} - \sum_{\ell \in \mathcal{E}^{d+}_m} I^{\text{\tiny d}}_{\ell} = a_m v^{\text{\tiny d}}_m, \ \forall m \in \mathcal{N}^{\text{\tiny d}}, \label{DRO:stg2_1}\\	
	& I^{\text{\tiny d}}_{\ell} =  \zza_{E_{\ell}} \bigg\{ \gamma_{\ell} (v^{\text{\tiny d}}_m - v^{\text{\tiny d}}_n + \tilde{\xi}_{\ell}) \bigg\}, \ \forall \ell_{mn} \in \mathcal{E}^{\text{\tiny d}}, \label{DRO:stg2_2} \\
	& I^{\text{\tiny eff}}_e = \lvert \Theta(I^{\text{\tiny d}}_{\ell}, \ \forall \ell \in E^{-1}_e ) \rvert, \ \forall e \in \mathcal{E}^{\tau}, \label{DRO:stg2_3} \\
	& s_i \geq \bigg[\sum_{e \in \mathcal{E}^{\tau}_i} k_e v_i I^{\text{\tiny eff}}_e - d^{\text{\tiny qloss}}_i \bigg]_+, \ \forall i \in \mathcal{N}. \label{DRO:stg2_4} 
\end{align}}
\label{DRO:stg2}
\end{subequations}
Constraints \eqref{DRO:stg1_1}--\eqref{DRO:stg1_9} model the AC power flow. These constraints include switching decision ($\boldsymbol{\zza, \zzg}$). 
Constraints \eqref{DRO:stg1_1} and \eqref{DRO:stg1_2} model real and reactive power balance constraints, including additional reactive power loss due to GICs ($d_i^{\text{\tiny qloss}}$). Constraints \eqref{DRO:stg1_3}--\eqref{DRO:stg1_9} model Ohm's law. 
Constraints \eqref{DRO:stg1_10} ensure that the apparent power flow does not exceed its limit.
Constraints \eqref{DRO:stg1_11}--\eqref{DRO:stg1_13} ensure that  voltage magnitudes, phase angle differences, and the amounts of real/reactive power generation are within their bounds.
Constraints \eqref{DRO:stg1_14} ensure that a generator is turned off when all the lines and transformers connected to that generator are off. 

In the second-stage problem, 
{\color{black} the objective function \eqref{DRO:stg2_obj} represents the damage due to GICs via the incurred reactive power losses, penalized by the unit cost $\kappa^{\text{\tiny s}}$.
}
Constraints \eqref{DRO:stg2_1}--\eqref{DRO:stg2_2} calculate valid GICs in the DC network for lines that are switched on. Otherwise, they are deactivated by constraints \eqref{DRO:stg2_2}.
Constraints \eqref{DRO:stg2_1} model the nodal balance equation for GICs in the DC network. Note that, in \eqref{DRO:stg2_1}, $a_m=0$ when $m$ is not a grounded neutral node. 
Constraints \eqref{DRO:stg2_3} calculate the effective GICs for each type of transformer (see Table \ref{tab-effGIC}), which in turn are used to calculate the additional reactive power losses due to GICs $\sum_{e \in \mathcal{E}^{\tau}_i} k_e v_i I^{\text{\tiny eff}}_e$ for every bus in the AC network, as shown in constraints \eqref{DRO:stg2_4}. 

\noindent

\textbf{Interpretation:} 
Given the first-stage solution after minimizing the operation and load-shedding costs, including switching decision $\boldsymbol{\zza}$, voltage magnitude $\boldsymbol{v}$, and the (allowable) reactive power loss, $\boldsymbol{d^{\text{\tiny qloss}}}$ 
and the realization of the uncertain E-field $\tilxi$, the second-stage problem calculates the effective GICs and the (actual) additional reactive power losses, given by $\sum_{e \in \mathcal{E}^{\tau}_i} k_e v_i I^{\text{\tiny eff}}_e$. If this exceeds the allowable $\boldsymbol{d^{\text{\tiny qloss}}}$ from the first stage, then appropriate switching decisions are updated in the AC network to mitigate the negative effects on the transformers. Implicitly, this formulation assumes that reactive losses smaller than $\boldsymbol{d^{\text{\tiny qloss}}}$ will not cause voltage problems (i.e., a high voltage).

\section{Solution Methodologies}
\label{sec:methodoloties}

{\color{black}
For ease of exposition, we express the DRO model \eqref{DRO:stg1} as 

\vspace{-2mm}
\begin{small}
	\begin{align}
	\min_{(\zb, \yb) \in \mathcal{A}} \ & \Big\{ \qb^{\text{\tiny T}} \zb + \bb^{\text{\tiny T}} \yb  + \sup_{\mathbb{P} \in \mathcal{D}} \mathbb{E}_{\mathbb{P}}[ \mathcal{V} (\zb, \yb, \tilxi) ] \Big\},
	\label{DRO-Simple}
	\end{align}
\end{small}	
where $\zb, \yb$ are the first-stage binary and continuous vectors, respectively, $\mathcal{A}$ is a feasible region of the first-stage problem \eqref{DRO:stg1}, and $\mathcal{V} (\zb, \yb, \tilxi)$ is the optimal value of the second-stage problem \eqref{DRO:stg2}.

Solving the DRO model \eqref{DRO-Simple} is challenging because of the existence of binary variables $\bm{z}$ representing switching decisions and nonlinear, nonconvex constraints (e.g., AC power flow equation at the first stage and the GIC constraints at the second stage).
Our goal is to find an approximate solution to \eqref{DRO-Simple} with the help of the following relaxed DRO model \cite{ryu2020pscc}:

\begin{small}
	\begin{align}
	\min_{(\zb, \yb) \in \mathcal{S}} \ & \Big\{ \qb^{\text{\tiny T}} \zb + \bb^{\text{\tiny T}} \yb  + \sup_{\mathbb{P} \in \mathcal{D}} \mathbb{E}_{\mathbb{P}}[ \mathcal{Q} (\zb, \yb, \tilxi) ] \Big\},
	\label{DROLB-Simple}
	\end{align}
\end{small}	
where $\mathcal{S}$ is a feasible region constructed by relaxing the AC power flow constraints to the second-order conic ones and $\mathcal{Q} (\zb, \yb, \tilxi)$ is obtained by solving the following linear program (LP):

\vspace{-2mm}
\begin{small}
	\begin{align}
	\mathcal{Q} (\zb, \yb, \tilxi) = \min_{\xb \in \mathcal{X}(\zb, \yb, \tilxi)} \ \cb^{\text{\tiny T}} \xb,  \label{DROLB-Simple:stg2}
	\end{align}
\end{small}	
where $\mathcal{X}(\zb, \yb, \tilxi) = \{ \xb \in \mathbb{R}^{n} \ : \ \Ab \xb + \Bb \zb + \Cb \yb \geq \db (\tilxi) \}$.
Note that an optimal solution $(\bm{z}^*, \bm{y}^*) \in \mathcal{S}$ obtained by \eqref{DROLB-Simple} may be infeasible for the DRO model \eqref{DRO-Simple}, that is, $(\bm{z}^*, \bm{y}^*) \notin \mathcal{A}$.
Thus, we discuss how to find feasible transmission grid operations $(\bm{\hat{z}}, \bm{\hat{y}}) \in \mathcal{A}$ in Section \ref{subsec:ACFeasible}.
}

We start by reformulating the relaxed DRO model. 
{\color{black}
In the following lemma, we utilize the standard duality-based reformulation technique as in \cite{velloso2020distributionally}.
}

\begin{lemma} \label{lemma-1}
Problem \eqref{DROLB-Simple} is equivalent to 

\vspace{-2mm}
\begin{small}
    \begin{align}
\min_{(\zb,\yb) \in \mathcal{S}} \ & \qb^{\text{\tiny T}} \zb + \bb^{\text{\tiny T}} \yb + \mub^{\text{\tiny T}} \lambdab  + \bigg\{ \max_{\tilxi \in \Xi} \  \mathcal{Q} (\zb, \yb, \tilxi) -  \lambdab^{\text{\tiny T}} \tilxi  \bigg\}, \label{DROLB-Simple:Reform1}
    \end{align}    
\end{small}
where $\mub=[\mu_{\text{\tiny E}} \ \mu_{\text{\tiny N}}]^{\text{\tiny T}}$, $\lambdab=[\lambda_{\text{\tiny E}} \  \lambda_{\text{\tiny N}}]^{\text{\tiny T}}$, and $\tilxi=[\tilxiE \ \tilxiN]^{\text{\tiny T}}$.
\end{lemma}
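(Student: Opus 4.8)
The plan is to fix the first-stage variables $(\zb,\yb)\in\mathcal{S}$ and reformulate only the inner worst-case expectation $\sup_{\Pb\in\mathcal{D}}\Eb_{\Pb}[\mathcal{Q}(\zb,\yb,\tilxi)]$, since the outer minimization and the terms $\qb^{\text{\tiny T}}\zb+\bb^{\text{\tiny T}}\yb$ are untouched by this operation. I would first observe that constraining both marginal means $\Eb_{\Pb_{\text{\tiny E}}}[\tilxiE]=\mu_{\text{\tiny E}}$ and $\Eb_{\Pb_{\text{\tiny N}}}[\tilxiN]=\mu_{\text{\tiny N}}$ is equivalent to the single vector moment condition $\Eb_{\Pb}[\tilxi]=\mub$, so that $\mathcal{D}$ is a standard mean--support ambiguity set.

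Next I would write the inner supremum as an infinite-dimensional linear program over nonnegative measures $\Pb$ supported on $\Xi$:
\begin{small}
\begin{align*}
\sup_{\Pb\ge 0}\ \Big\{ \textstyle\int_{\Xi}\mathcal{Q}(\zb,\yb,\tilxi)\,\mathrm{d}\Pb(\tilxi)\ :\ \textstyle\int_{\Xi}\mathrm{d}\Pb=1,\ \textstyle\int_{\Xi}\tilxi\,\mathrm{d}\Pb=\mub \Big\}.
\end{align*}
\end{small}
Assigning a scalar multiplier $\rho$ to the normalization constraint and a vector multiplier $\lambdab$ to the mean constraint, the Lagrangian dual reads
\begin{small}
\begin{align*}
\inf_{\rho,\lambdab}\ \Big\{ \rho+\mub^{\text{\tiny T}}\lambdab\ :\ \rho+\lambdab^{\text{\tiny T}}\tilxi\ge \mathcal{Q}(\zb,\yb,\tilxi)\ \ \forall\,\tilxi\in\Xi \Big\}.
\end{align*}
\end{small}
Minimizing out $\rho$ forces $\rho=\max_{\tilxi\in\Xi}\{\mathcal{Q}(\zb,\yb,\tilxi)-\lambdab^{\text{\tiny T}}\tilxi\}$ (attained by compactness of $\Xi$), which collapses the dual to $\inf_{\lambdab}\big\{\mub^{\text{\tiny T}}\lambdab+\max_{\tilxi\in\Xi}[\mathcal{Q}(\zb,\yb,\tilxi)-\lambdab^{\text{\tiny T}}\tilxi]\big\}$. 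Substituting this for the inner supremum and merging $\inf_{\lambdab}$ with the outer $\min_{(\zb,\yb)\in\mathcal{S}}$ yields exactly \eqref{DROLB-Simple:Reform1}.

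The main obstacle is justifying \emph{strong duality} (zero duality gap) between the primal moment problem and its dual, since the equivalence asserted in \eqref{DROLB-Simple:Reform1} hinges on it. I would invoke the standard conic-duality result for moment problems (as in \cite{velloso2020distributionally}), which requires a Slater-type constraint qualification: there must exist a strictly feasible distribution, i.e., the mean $\mub$ must lie in the relative interior of $\Xi$. This holds here because $\mub=(\mu_{\text{\tiny E}},\mu_{\text{\tiny N}})=(M\cos\delta^{\mu},M\sin\delta^{\mu})$ sits strictly inside the polyhedral support $\Xi=\mathrm{conv}\{(\hat{\xi}_{\text{\tiny E}}^{\ell},\hat{\xi}_{\text{\tiny N}}^{\ell}):\ell\in[N]\}$ (cf. Figure \ref{fig:SupportSets}), so a suitable mixture of Dirac masses at the extreme points reproduces $\mub$ with strictly positive weights. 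Finiteness and attainment of both the sup over $\Pb$ and the max over $\tilxi$ follow because $\Xi$ is compact and $\mathcal{Q}(\zb,\yb,\cdot)$---being the optimal value of the linear program \eqref{DROLB-Simple:stg2} whose right-hand side $\db(\tilxi)$ is affine in $\tilxi$---is a finite, convex, and continuous function of $\tilxi$ on $\Xi$. A minor point to verify is that the $\inf_{\lambdab}$ introduced by dualization can be absorbed into the outer minimization without altering the feasible region $\mathcal{S}$, which is immediate since $\lambdab$ is a free auxiliary variable decoupled from the constraints defining $\mathcal{S}$.
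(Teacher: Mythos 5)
Your proposal follows essentially the same duality-based reformulation as the paper's proof: both express the inner supremum as an infinite-dimensional moment problem over distributions supported on $\Xi$, dualize with multipliers for the mean and normalization constraints, and eliminate the scalar multiplier by identifying it with the inner maximization $\max_{\tilxi \in \Xi}\{\mathcal{Q}(\zb,\yb,\tilxi)-\lambdab^{\text{\tiny T}}\tilxi\}$. Your explicit verification of strong duality via the Slater-type condition ($\mub$ in the relative interior of $\Xi$) and of attainment by compactness makes rigorous a step the paper's proof leaves implicit, but it is the same argument.
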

\begin{proof}
{\color{black}
The worst-case expected value $\sup_{\mathbb{P} \in \mathcal{D}} \mathbb{E}_{\mathbb{P}}[ \mathcal{Q} (\zb, \yb, \tilxi) ]$ can be rewritten as follows:

\vspace{-2mm}
\begin{small}
\begin{align*}
\max \ & \int_{\tilxi \in \Xi}  \mathcal{Q} (\zb, \yb, \tilxi) \ d\mathbb{P} \\
\mbox{s.t.} \
& \int_{\tilxi \in \Xi} \tilxiE \ d\mathbb{P} = \mu_{\text{\tiny E}}, \ \int_{\tilxi \in \Xi} \tilxiN \ d\mathbb{P} = \mu_{\text{\tiny N}}, \ \int_{\tilxi \in \Xi} \ d\mathbb{P} = 1. 
\end{align*}
\end{small}
By taking the dual, we obtain

\vspace{-2mm}
\begin{small}
\begin{align*}
\min \ & \mu_{\text{\tiny E}} \lambda_{\text{\tiny E}} + \mu_{\text{\tiny N}} \lambda_{\text{\tiny N}} + \eta \\
\mbox{ s.t. } \ & \lambda_{\text{\tiny E}} \tilxiE + \lambda_{\text{\tiny N}}  \tilxiN  + \eta \geq  \ \mathcal{Q} (\zb, \yb, \tilxi), \ \forall \tilxi \in \Xi,
\end{align*}
\end{small}
where $\lambda_{\text{\tiny E}}$, $\lambda_{\text{\tiny N}}$, and $\eta$ are dual variables.
At optimality, we have $\eta^* = \max_{\tilxi \in \Xi} \ \{ \mathcal{Q} (\zb,\yb, \tilxi) - \lambda_{\text{\tiny E}} \tilxiE - \lambda_{\text{\tiny N}}  \tilxiN \}$, which leads to the proposed formulation.
}
\end{proof}

\begin{lemma} \label{lemma-2}
An optimal solution $\tilxi^*$ to the inner max problem in \eqref{DROLB-Simple:Reform1} is an extreme point of the polyhedral support set $\Xi$, namely, $\tilxi^* \in \{\hat{\bm{\xi}}^{\ell}, \ \forall \ell \in [N] \}$.
\end{lemma}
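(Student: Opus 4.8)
The plan is to reduce the claim to the elementary fact that a convex function attains its maximum over a polytope at one of the polytope's extreme points. Since the linear term $-\lambdab^{\text{\tiny T}} \tilxi$ is trivially convex, the entire task is to show that the recourse value $\mathcal{Q}(\zb, \yb, \tilxi)$ is convex as a function of $\tilxi$, with $(\zb, \yb)$ held fixed.

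First I would note that, by the description of \eqref{DROLB-Simple:stg2}, the uncertainty $\tilxi = [\tilxiE\ \tilxiN]^{\text{\tiny T}}$ enters the second-stage LP only through the right-hand side vector $\db(\tilxi)$, and that $\db(\cdot)$ is affine in $\tilxi$; this is inherited from the GIC voltage-source definition $\tilde{\xi}_{\ell} = \tilxiE L^{\text{\tiny E}}_{\ell} + \tilxiN L^{\text{\tiny N}}_{\ell}$, which is linear in $(\tilxiE, \tilxiN)$. Taking the LP dual of \eqref{DROLB-Simple:stg2} then yields
\begin{small}
\begin{align*}
\mathcal{Q}(\zb, \yb, \tilxi) = \max_{\pi \geq 0,\ \Ab^{\text{\tiny T}} \pi = \cb} \ \pi^{\text{\tiny T}} \big( \db(\tilxi) - \Bb \zb - \Cb \yb \big),
\end{align*}
\end{small}
where, crucially, the dual feasible region $\{\pi \geq 0 : \Ab^{\text{\tiny T}} \pi = \cb\}$ is independent of $\tilxi$. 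For each fixed dual-feasible $\pi$, the objective $\pi^{\text{\tiny T}}(\db(\tilxi) - \Bb \zb - \Cb \yb)$ is affine in $\tilxi$, so $\mathcal{Q}(\zb, \yb, \cdot)$ is a pointwise maximum of affine functions and hence convex in $\tilxi$.

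Given this convexity, the inner objective $\mathcal{Q}(\zb, \yb, \tilxi) - \lambdab^{\text{\tiny T}} \tilxi$ is convex in $\tilxi$. Since $\Xi = \text{conv}\{\hat{\bm{\xi}}^{\ell},\ \forall \ell \in [N]\}$ is a polytope whose extreme points are precisely the $\hat{\bm{\xi}}^{\ell}$, and since the maximum of a convex function over a nonempty compact convex set is attained at an extreme point, I would conclude that there is an optimal $\tilxi^*$ lying in $\{\hat{\bm{\xi}}^{\ell},\ \forall \ell \in [N]\}$, which is the assertion.

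The hard part will be the passage to the dual: one must confirm that the second-stage LP \eqref{DROLB-Simple:stg2} is feasible and bounded for every $\tilxi \in \Xi$, so that strong duality applies and $\mathcal{Q}(\zb, \yb, \cdot)$ is finite-valued, rather than equal to $+\infty$ on part of $\Xi$, which would break the convexity-to-extreme-point argument. This is ensured by the relatively complete recourse built into \eqref{DRO:stg2}: the penalized nonnegative slacks $s_i$ in constraints \eqref{DRO:stg2_4} keep the recourse feasible for any first-stage decision and any realization of $\tilxi$, and the bounded support $\Xi$ together with $s_i \geq 0$ keeps the optimal value finite. With this feasibility check dispatched, the convexity argument above completes the proof.
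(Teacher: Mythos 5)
Your proposal is correct and follows essentially the same route as the paper's proof: dualize the second-stage LP so that $\mathcal{Q}(\zb,\yb,\cdot)$ becomes a pointwise maximum of functions affine in $\tilxi$ (the dual feasible set being independent of $\tilxi$), conclude convexity, and invoke the fact that a convex function over a polytope attains its maximum at an extreme point. Your additional verification that strong duality applies --- via the relatively complete recourse provided by the penalized slacks $s_i$, so that $\mathcal{Q}$ is finite-valued on all of $\Xi$ --- is a point the paper's proof passes over silently, and it is a worthwhile bit of extra rigor rather than a deviation in approach.
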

\begin{proof}
By utilizing the dual formulation of the second-stage problem \eqref{DROLB-Simple:stg2}, we can rewrite the inner max problem in \eqref{DROLB-Simple:Reform1} as

\vspace{-2mm}
\begin{small}
\begin{subequations}
\label{DROLB-Simple:innermax}
\begin{align}
\max_{ \tilxi \in \Xi  } \ & (\db (\tilxi) - \Bb \zb - \Cb \yb)^{\text{\tiny T}} \betab -  \lambdab^{\text{\tiny T}} \tilxi \\
\mbox{s.t. } \ & \Ab^{\text{\tiny T}} \betab = \cb, \ \ \betab \in \mathbb{R}^m_+,
\end{align}    
\end{subequations}
\end{small}
where $\betab$ is a dual vector corresponding to constraints in primal LP \eqref{DROLB-Simple:stg2}.
Note that formulation \eqref{DROLB-Simple:innermax} includes (i) bilinear terms, namely, inner product of $\betab$ and $\tilxi$, and (ii) a convex objective function in $\tilxi$ since it is the maximum of infinitely many linear functions.
Because of its convexity, for a given feasible solution ($\zb, \yb, \lambdab$), an optimal solution to the inner max problem \eqref{DROLB-Simple:innermax} is an extreme point of the support set $\Xi$.
\end{proof}

The relaxed DRO model \eqref{DROLB-Simple} is solvable by utilizing the CCG algorithm. 
Specifically, by Lemma \ref{lemma-2}, we can rewrite the relaxed DRO model \eqref{DROLB-Simple} as follows:

\vspace{-2mm}
\begin{small}
\begin{subequations}
\label{DRO_Poly_Robust}
\begin{align}
\min_{(\zb, \yb) \in \mathcal{S}} \ & \qb^{\text{\tiny T}} \zb + \bb^{\text{\tiny T}} \yb + \mub^{\text{\tiny T}} \lambdab + \eta \\
\mbox{s.t.} \ 
& \eta \geq \cb^{\text{\tiny T}} \xb^{\ell} - \lambdab^{\text{\tiny T}} \hat{\bm{\xi}}^{\ell}, \ \forall \ell \in [N], \label{DRO_Poly_Robust-1} \\
& \Ab \xb^{\ell} + \Bb \zb + \Cb \yb \geq \db(\hat{\bm{\xi}}^{\ell}), \ \forall \ell \in [N], \label{DRO_Poly_Robust-2}
\end{align}
\end{subequations}
\end{small}
where $\xb^{\ell}$ is a recourse decision vector defined for every scenario $\ell \in [N]$, each of which corresponds to an extreme point $\hat{\bm{\xi}}^{\ell}$ of the support set $\Xi$.
The main idea of the CCG algorithm is that one might not need all scenarios $[N]$ to find an optimal solution to problem \eqref{DRO_Poly_Robust}. 
Thus, it first relaxes all constraints and dynamically generates a set of constraints \eqref{DRO_Poly_Robust-1} and \eqref{DRO_Poly_Robust-2} with recourse decision vector $\xb^{\ell}$ for an identified scenario $\ell \in [N]$.
The complexity of this algorithm is $O(N)$, where $N$ is the total number of extreme points of the support set $\Xi$. If the support set $\Xi$ contains infinitely many extreme points (e.g., a half-circle as depicted in Figure \ref{fig:SupportSets}), this algorithm does not guarantee finite convergence. 
Readers interested in a solution approach for solving the relaxed DRO model \eqref{DROLB-Simple} under the nonlinear, half-circle support set should see \cite{ryu2020pscc}.

In the remainder of this section, we describe a MISOCP reformulation of the relaxed DRO model \eqref{DROLB-Simple} under the triangle support set in Section \ref{subsec:triangle}.
Then, in Section \ref{subsec:AcceleratedCCG} we propose an accelerated CCG algorithm that improves the computational performance of CCG, which is the main result of this paper.
In Section \ref{subsec:ACFeasible} we discuss how to find feasible transmission grid operations $(\bm{\hat{z}}, \bm{\hat{y}}) \in \mathcal{A}$.

\subsection{Triangle support set: MISOCP reformulation}
\label{subsec:triangle}
In this section we focus on a triangle support set $\Xi^3 = \text{conv} \{ \hat{\bm{\xi}}^{\ell}, \ \forall \ell \in [3] \}$ (see Figure \ref{fig:SupportSets} for an example). 
Indeed the relaxed DRO model \eqref{DROLB-Simple} under $\Xi^3$ can be solved by CCG.
Alternatively, we derive an exact MISOCP reformulation (as discussed in Proposition \ref{prop:monolithic}), which can be solved efficiently by using off-the-shelf commercial optimization solvers such as CPLEX or Gurobi. 
\begin{lemma}
Under the triangle support set $\Xi^3$, the worst-case expected value $\sup_{\mathbb{P} \in \mathcal{D}} \mathbb{E}_{\mathbb{P}}[ \mathcal{Q} (\zb, \yb, \tilxi) ]$ in the formulation \eqref{DROLB-Simple} is equivalent to
	
	\vspace{-2mm}
	\begin{small}
	\begin{subequations}
		\label{PrimalLP}
		\begin{align}
		\max_{p \in \mathbb{R}^3_+} \ & \sum_{\ell=1}^3 \mathcal{Q}(\zb, \yb, \hat{\bm{\xi}}^{\ell}) p_{\ell} \\
		\mbox{s.t.} \ 
		& \sum_{\ell=1}^3 \hat{\xi}_{\text{\tiny E}}^{\ell} p_{\ell} = \mu_{\text{\tiny E}}, \ 
		\sum_{\ell=1}^3 \hat{\xi}_{\text{\tiny N}}^{\ell} p_{\ell} = \mu_{\text{\tiny N}}, \ 
		\sum_{\ell=1}^3 p_{\ell} = 1,  \label{PrimalLP-3}
		\end{align}
	\end{subequations}
	\end{small}
	where $\{ \hat{\bm{\xi}}^1, \hat{\bm{\xi}}^2, \hat{\bm{\xi}}^3 \}$ are the extreme points of the set $\Xi^3$.
\end{lemma}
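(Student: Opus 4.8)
The plan is to recognize \eqref{PrimalLP} as the linear-programming (LP) dual of the minimization over $(\lambdab,\eta)$ that already surfaced in the proof of Lemma~\ref{lemma-1}, and to use Lemma~\ref{lemma-2} to collapse that problem's semi-infinite constraint down to the three vertices of $\Xi^3$. First I would fix the first-stage decision $(\zb,\yb)$ and recall, from the dual derivation inside Lemma~\ref{lemma-1} specialized to $\Xi=\Xi^3$, that for this fixed $(\zb,\yb)$ the quantity $\sup_{\Pb\in\mathcal{D}}\Eb_{\Pb}[\mathcal{Q}(\zb,\yb,\tilxi)]$ equals the optimal value of
\begin{align*}
\min_{\lambda_{\text{\tiny E}},\lambda_{\text{\tiny N}},\eta} \ & \mu_{\text{\tiny E}}\lambda_{\text{\tiny E}} + \mu_{\text{\tiny N}}\lambda_{\text{\tiny N}} + \eta \\
\mbox{s.t. } \ & \lambda_{\text{\tiny E}}\tilxiE + \lambda_{\text{\tiny N}}\tilxiN + \eta \geq \mathcal{Q}(\zb,\yb,\tilxi), \ \ \forall \tilxi \in \Xi^3 .
\end{align*}

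Next I would reduce this semi-infinite constraint to finitely many inequalities. The left-hand side is affine in $\tilxi$, whereas $\mathcal{Q}(\zb,\yb,\cdot)$ is convex in $\tilxi$, as shown in the proof of Lemma~\ref{lemma-2} by writing $\mathcal{Q}$ as a maximum of linear functions of $\tilxi$. Hence the constraint slack is a concave function of $\tilxi$, so its minimum over the polytope $\Xi^3$ is attained at a vertex; equivalently, Lemma~\ref{lemma-2} guarantees the worst-case $\tilxi$ lies in $\{\hat{\bm{\xi}}^1,\hat{\bm{\xi}}^2,\hat{\bm{\xi}}^3\}$. Therefore requiring the inequality for every $\tilxi\in\Xi^3$ is equivalent to requiring only the three conditions $\lambda_{\text{\tiny E}}\hat{\xi}_{\text{\tiny E}}^{\ell}+\lambda_{\text{\tiny N}}\hat{\xi}_{\text{\tiny N}}^{\ell}+\eta \geq \mathcal{Q}(\zb,\yb,\hat{\bm{\xi}}^{\ell})$ for $\ell\in[3]$, in which the right-hand sides $\mathcal{Q}(\zb,\yb,\hat{\bm{\xi}}^{\ell})$ are constants for fixed $(\zb,\yb)$. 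This leaves a finite LP in $(\lambda_{\text{\tiny E}},\lambda_{\text{\tiny N}},\eta)$.

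Finally I would dualize this finite LP, attaching a multiplier $p_{\ell}\ge 0$ to each of the three inequalities. Matching the coefficients of the free variables $\eta$, $\lambda_{\text{\tiny E}}$, and $\lambda_{\text{\tiny N}}$ yields precisely the equality constraints $\sum_{\ell=1}^{3}p_{\ell}=1$, $\sum_{\ell=1}^{3}\hat{\xi}_{\text{\tiny E}}^{\ell}p_{\ell}=\mu_{\text{\tiny E}}$, and $\sum_{\ell=1}^{3}\hat{\xi}_{\text{\tiny N}}^{\ell}p_{\ell}=\mu_{\text{\tiny N}}$, while the dual objective becomes $\max_{p\in\mathbb{R}^3_+}\sum_{\ell=1}^{3}\mathcal{Q}(\zb,\yb,\hat{\bm{\xi}}^{\ell})p_{\ell}$, which is exactly \eqref{PrimalLP}. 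Invoking strong LP duality then equates the two optimal values and establishes the claimed equivalence.

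The step needing the most care is the appeal to strong duality, which I would justify through feasibility rather than a constraint qualification: the mean $(\mu_{\text{\tiny E}},\mu_{\text{\tiny N}})$ lies inside the triangle $\Xi^3$ (otherwise the ambiguity set $\mathcal{D}$ would be empty), so its barycentric coordinates supply a feasible $p$ for \eqref{PrimalLP}. Since the minimization LP is trivially feasible (take $\lambda_{\text{\tiny E}}=\lambda_{\text{\tiny N}}=0$ and $\eta$ large) and the dual is feasible, both attain a common finite optimum, closing the argument. The reduction from infinitely many scenarios to the three vertices is the conceptual heart, but it is furnished directly by the convexity observation of Lemma~\ref{lemma-2}; the remaining work is the routine primal--dual bookkeeping above.
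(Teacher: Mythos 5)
Your proof is correct, but it takes a genuinely different route from the paper's. The paper argues on the primal (distribution) side: for fixed $(\zb,\yb)$ it uses the convexity of $\mathcal{Q}(\zb,\yb,\cdot)$ from Lemma~\ref{lemma-2} to form the convex hull $\mathcal{H}$ of the graph of $\mathcal{Q}$ over $\Xi^3$, observes that any expectation under a distribution in $\mathcal{D}$ corresponds to a point of $\mathcal{H}$ lying above $\mub$, and then exploits the fact that a simplex admits a \emph{unique} convex combination of its three vertices equal to $\mub$; the worst case is therefore the discrete distribution placing those barycentric weights on the vertices, which is exactly \eqref{PrimalLP}. You instead chain through duality twice: you start from the semi-infinite minimization over $(\lambdab,\eta)$ derived in the proof of Lemma~\ref{lemma-1}, collapse its constraints to the three vertices because the slack is concave in $\tilxi$ (affine minus convex, again Lemma~\ref{lemma-2}), and then dualize the resulting finite LP back, justifying strong duality by exhibiting feasible points on both sides. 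Both arguments hinge on the same convexity fact, and your finite-LP dualization is essentially the computation the paper later performs in the opposite direction in Proposition~\ref{prop:closedform}. What your route buys is explicitness: the strong-duality step is justified by feasibility rather than asserted, and the bookkeeping is mechanical. What it costs is self-containedness: you inherit the moment-problem strong duality implicitly assumed in Lemma~\ref{lemma-1}, whereas the paper's primal argument bypasses that entirely and, as a byproduct, directly identifies the worst-case distribution $(p_1^*,p_2^*,p_3^*)$, which is the content of the corollary that follows the lemma.
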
 
\begin{proof}
{\color{black}
For fixed $(\zb, \yb) \in \mathcal{S}$, the function $\mathcal{Q}(\zb, \yb, \tilxi)$ is convex in $\tilxi$ by Lemma \ref{lemma-2}. Let $\mathcal{H}:= \text{conv} \{(\tilxi, t) : \tilxi \in \Xi^3, \ t =\mathcal{Q}(\zb, \yb, \tilxi) \}$.
Since taking expectation can be viewed as a convex combination, it follows that $(\mub, \mathcal{Q}(\zb, \yb, \mub)) \in \mathcal{H}$.
Since the set $\mathcal{D}$ is a mean-support ambiguity set where the support set $\Xi^3$ is a simplex with 3 extreme points $\hat{\bm{\xi}}^1, \hat{\bm{\xi}}^2, \hat{\bm{\xi}}^3$, there is an unique convex combination of the extreme points which yields $\mub$.
Therefore, we have $\sup_{\Pb \in \mathcal{D}} \Eb_{\Pb}[\mathcal{Q}(\zb, \tilxi)] = \sup \{ t | (\mub, t) \in \mathcal{H} \} $, where $\mub$ is a convex combination of the 3 extreme points, $\mub = \hat{\bm{\xi}}^1 p_1 + \hat{\bm{\xi}}^2 p_2 + \hat{\bm{\xi}}^3 p_3$, and $t = \mathcal{Q}(\zb, \hat{\bm{\xi}}^1) p_1 + \mathcal{Q}(\zb, \hat{\bm{\xi}}^2) p_2 + \mathcal{Q}(\zb, \hat{\bm{\xi}}^3) p_3$.
}
\end{proof}

\begin{corollary}
	The worst-case distribution $(p^*_1, p^*_2, p^*_3)$ is uniquely determined by the unique convex combination of extreme points, which yields $\mub$.
	
	\vspace{-2mm}
	\begin{small}
	\begin{align*}
	\begin{bmatrix}
	p^*_1 \\ p^*_2 \\ p^*_3
	\end{bmatrix}
	=
	\begin{bmatrix}
	\hat{\xi}_{\text{\tiny E}}^1 & \hat{\xi}_{\text{\tiny E}}^2 & \hat{\xi}_{\text{\tiny E}}^3 \\
	\hat{\xi}_{\text{\tiny N}}^1 & \hat{\xi}_{\text{\tiny N}}^2 & \hat{\xi}_{\text{\tiny N}}^3 \\
	1 & 1 & 1 \\		
	\end{bmatrix}^{-1}
	\begin{bmatrix}
	\mu_{\text{\tiny E}} \\ \mu_{\text{\tiny N}} \\ 1
	\end{bmatrix}
	\end{align*}
	\end{small}
	As long as $\mub \in \Xi^3$, we have $p^*_1, p^*_2, p^*_3 \geq 0$.
\end{corollary}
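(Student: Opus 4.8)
The plan is to read the corollary directly off the primal linear program \eqref{PrimalLP} established in the preceding lemma, so that almost no new work is required. First I would observe that the three equality constraints in \eqref{PrimalLP-3} can be written compactly as a square linear system $\bm{M} \bm{p} = [\mu_{\text{\tiny E}} \ \mu_{\text{\tiny N}} \ 1]^{\text{\tiny T}}$, where $\bm{M}$ is exactly the $3 \times 3$ matrix appearing in the statement: its first two rows collect the eastward and northward coordinates of the three extreme points, and its last row is all ones. The whole argument then reduces to two facts about this system, namely that it has a unique solution and that this solution is nonnegative precisely when $\mub \in \Xi^3$.

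For uniqueness, I would argue that $\bm{M}$ is nonsingular. Since $\Xi^3$ is a (non-degenerate) triangle, its three vertices $\hat{\bm{\xi}}^1, \hat{\bm{\xi}}^2, \hat{\bm{\xi}}^3$ are affinely independent in $\mathbb{R}^2$; equivalently, the augmented matrix $\bm{M}$ has full rank, i.e., $\det \bm{M} \neq 0$ (its determinant equals twice the signed area of the triangle). Inverting yields the stated closed form $\bm{p}^* = \bm{M}^{-1}[\mu_{\text{\tiny E}} \ \mu_{\text{\tiny N}} \ 1]^{\text{\tiny T}}$. Because the feasible set cut out by \eqref{PrimalLP-3} is then the single point $\bm{p}^*$, the maximization in \eqref{PrimalLP} is vacuous: its optimizer---the worst-case distribution---is forced to equal $\bm{p}^*$ independently of the objective coefficients $\mathcal{Q}(\zb, \yb, \hat{\bm{\xi}}^{\ell})$.

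It then remains to verify the sign condition. Here the entries of $\bm{p}^*$ are precisely the barycentric coordinates of $\mub$ with respect to $\Xi^3$. By definition of the convex hull, $\mub \in \Xi^3$ holds if and only if $\mub$ admits a representation $\mub = \sum_{\ell=1}^3 \hat{\bm{\xi}}^{\ell} p_\ell$ with $p_\ell \geq 0$ and $\sum_{\ell=1}^3 p_\ell = 1$; by the uniqueness just established, this representation is $\bm{p}^*$ itself. Hence $\mub \in \Xi^3$ forces $p^*_1, p^*_2, p^*_3 \geq 0$, so $\bm{p}^*$ is feasible for \eqref{PrimalLP}, and the corollary follows.

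I expect the only delicate point to be the nondegeneracy of $\Xi^3$: the closed-form inverse presumes $\det \bm{M} \neq 0$, which fails exactly when the three chosen extreme points are collinear. I would therefore either state explicitly that the three vertices are in general position (consistent with the illustration in Figure \ref{fig:SupportSets}) or carry this as a standing assumption, after which both the invertibility of $\bm{M}$ and the barycentric-coordinate interpretation of $\bm{p}^*$ are routine.
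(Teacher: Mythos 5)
Your proposal is correct and follows essentially the same route as the paper: the paper leaves the corollary unproved because its lemma already observes that $\Xi^3$ being a simplex (affinely independent vertices) makes the convex combination yielding $\mub$ unique, which is exactly your nonsingular-system/barycentric-coordinate argument written out in full. Your explicit flagging of the nondegeneracy assumption ($\det \bm{M} \neq 0$) is a reasonable refinement of what the paper treats as implicit in calling $\Xi^3$ a simplex.
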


\begin{proposition} \label{prop:monolithic}
	The relaxed DRO model \eqref{DROLB-Simple} under the triangle support set ($\Xi^3$) is equivalent to the following MISOCP:
	
	\vspace{-2mm}
	\begin{small}
	\begin{subequations}
	\label{SLP_Primal}		
	\begin{align}
	\min_{(\zb,\yb) \in \mathcal{S}} \ & \qb^{\text{\tiny T}} \zb + \bb^{\text{\tiny T}} \yb + \sum_{\ell=1}^3 p^*_{\ell} ( \cb^{\text{\tiny T}} \xb^{\ell} ) \\
	\mbox{s.t.} \
	& \Ab \xb^{\ell} + \Bb \zb + \Cb \yb \geq \db(\hat{\bm{\xi}}^{\ell}), \  \forall \ell \in [3].
	\end{align}	
	\end{subequations}
	\end{small}
\end{proposition}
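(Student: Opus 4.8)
The plan is to chain together the preceding Lemma and Corollary and then decouple the recourse problems. First I would invoke the preceding Lemma to replace the worst-case expectation $\sup_{\mathbb{P} \in \mathcal{D}} \mathbb{E}_{\mathbb{P}}[\mathcal{Q}(\zb, \yb, \tilxi)]$ appearing in \eqref{DROLB-Simple} by the moment-matching LP \eqref{PrimalLP}, whose variables are the probabilities $p_\ell$ assigned to the three vertices $\hat{\bm{\xi}}^\ell$ of $\Xi^3$.

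The key structural observation is that the feasible region of \eqref{PrimalLP} is a \emph{singleton}. Since $\Xi^3$ is a simplex with exactly three extreme points, the three constraints in \eqref{PrimalLP-3}---matching $\mu_{\text{\tiny E}}$, matching $\mu_{\text{\tiny N}}$, and summing to one---form a square linear system in $(p_1, p_2, p_3)$ governed by the invertible vertex matrix of the Corollary. Hence there is a unique feasible $p^* = (p^*_1, p^*_2, p^*_3)$, which does \emph{not} depend on $(\zb, \yb)$, and by the Corollary $p^*_\ell \geq 0$ whenever $\mub \in \Xi^3$. Consequently the inner maximization is vacuous, and the worst-case expectation collapses to $\sum_{\ell=1}^3 p^*_\ell \, \mathcal{Q}(\zb, \yb, \hat{\bm{\xi}}^\ell)$.

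It remains to linearize each $\mathcal{Q}(\zb, \yb, \hat{\bm{\xi}}^\ell)$. I would substitute its definition \eqref{DROLB-Simple:stg2} as the minimum of $\cb^{\text{\tiny T}} \xb^\ell$ over $\xb^\ell \in \mathcal{X}(\zb, \yb, \hat{\bm{\xi}}^\ell)$, introducing a distinct recourse vector $\xb^\ell$ for each scenario $\ell \in [3]$. Because the weights $p^*_\ell$ are nonnegative and the three recourse vectors are decoupled---each $\xb^\ell$ enters only the $\ell$-th objective term and only its own constraint $\Ab \xb^\ell + \Bb \zb + \Cb \yb \geq \db(\hat{\bm{\xi}}^\ell)$---the nested minimization over $(\zb, \yb)$ followed by the separate minimizations over each $\xb^\ell$ merges into a single joint minimization. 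This yields precisely \eqref{SLP_Primal}; and since $\mathcal{S}$ carries the second-order cone constraints inherited from the relaxed AC power flow while $\zb$ is binary, the result is an MISOCP.

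The main obstacle is the second step: one must argue that, under the triangle support, the ambiguity set effectively contains a \emph{single} admissible distribution, so that the robust ``$\sup$'' degenerates and no genuine worst case is taken. The nonnegativity $p^*_\ell \geq 0$ is essential here---were some weight negative, the corresponding inner $\min$ would turn into a $\max$ and the decoupling merge in the final step would fail---so the standing assumption $\mub \in \Xi^3$ must be carried throughout the argument.
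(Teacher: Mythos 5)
Your proposal is correct and is essentially the paper's own (implicit) argument: the paper states Proposition~\ref{prop:monolithic} without a written proof precisely because it follows from the preceding Lemma and Corollary in the way you lay out --- the moment-matching LP \eqref{PrimalLP} has a singleton feasible set $p^*$ that is independent of $(\zb,\yb)$, so the supremum degenerates, and the nonnegativity of $p^*$ lets the decoupled recourse minimizations merge with the outer minimization over $(\zb,\yb)\in\mathcal{S}$ to give \eqref{SLP_Primal}. Your closing caveat is also on target, and the paper handles it in Remark~\ref{remark-1} by restricting to $\mub$ in the interior of $\Xi^3$, which gives the slightly stronger $p^*_{\ell}>0$ for all $\ell$ and thereby avoids any degenerate scenario with zero weight.
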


\begin{remark} \label{remark-1}
	If $\mub$ coincides with an extreme point $\hat{\bm{\xi}}^{\ell^*}$ of $\Xi^3$, problem \eqref{SLP_Primal} is equivalent to a deterministic optimization problem since $p_{\ell^*} = 1$.
	If $\mub$ lies in an edge of $\Xi^3$, problem \eqref{SLP_Primal} is equivalent to a stochastic program with $2$ scenarios. In this paper we consider only the case when $\mub$ lies in the interior of $\Xi^3$.
\end{remark}

\subsection{Accelerated CCG algorithm} \label{subsec:AcceleratedCCG}
In this section we propose a \textit{new} accelerated CCG algorithm that solves the relaxed DRO model \eqref{DROLB-Simple} under the polyhedral support set $\Xi$.
The main idea of the proposed algorithm is to incorporate the MISOCP reformulation in Proposition \ref{prop:monolithic} into the CCG algorithm.
Specifically, given any polyhedral support set $\Xi$, one can always construct a triangle support set $\Xi^3$$\subseteq$$\Xi$ that contains $\mub$.
Therefore, the MISOCP reformulation \eqref{SLP_Primal} under $\Xi^3$$\subseteq$$\Xi$ provides a lower bound on the optimal value of problem \eqref{DROLB-Simple} under $\Xi$.
By incorporating the obtained lower bound into the CCG algorithm, we wish to improve the computational performance.

Suppose that ($\hat{\zb}, \hat{\yb}, \hat{ \xb}^{\ell}, \forall \ell \in [3]$) is an optimal solution and $\text{OBJ}=\qb^{\text{\tiny T}} \hat{\zb} + \bb^{\text{\tiny T}} \hat{\yb} + \sum_{\ell=1}^3 p^*_{\ell} ( \cb^{\text{\tiny T}} \hat{\xb}^{\ell})$ is an optimal value of formulation \eqref{SLP_Primal}.
Since $\text{OBJ}$ is a lower bound on the optimal value of problem \eqref{DROLB-Simple}, we can set OBJ as an initial lower bound in the CCG algorithm.
Then, we plug ($\hat{\zb}, \hat{\yb}, \hat{\xb}^{\ell}, \forall \ell \in [3]$) into the master problem, which leads to the following problem:

\vspace{-2mm}
\begin{small}
\begin{subequations}
\label{Modified_Master_1}
\begin{align}
\min \ & \qb^{\text{\tiny T}} \hat{\zb} + \bb^{\text{\tiny T}} \hat{\yb} + \mu_{\text{\tiny E}} \lambda_{\text{\tiny E}} + \mu_{\text{\tiny N}} \lambda_{\text{\tiny N}} + \eta \\ 
\mbox{ s.t. } \ 
& \eta \geq \cb^{\text{\tiny T}} \hat{\xb}^{\ell} - \lambda_{\text{\tiny E}} \hat{\xi}_{\text{\tiny E}}^{\ell} - \lambda_{\text{\tiny N}} \hat{\xi}_{\text{\tiny N}}^{\ell} , \ \forall \ell \in \mathcal{T}, \\
& \Ab \hat{\xb}^{\ell} + \Bb \hat{\zb} + \Cb \hat{\yb} \geq \db(\hat{\bm{\xi}}^{\ell}), \ \forall \ell \in \mathcal{T},
\end{align}
\end{subequations}
\end{small}
where $\mathcal{T}$ represents a set of scenarios and
$(\lambda_{\text{\tiny E}}, \lambda_{\text{\tiny N}}, \eta)$ are decision variables.
In the following proposition, we describe a closed-form solution of $(\lambda_{\text{\tiny E}}, \lambda_{\text{\tiny N}}, \eta)$.

\begin{proposition} \label{prop:closedform}
For $\mathcal{T}=[3]$, problem \eqref{Modified_Master_1} admits the following closed-form solution:

\vspace{-2mm}
\begin{small}
\begin{align*}
\begin{bmatrix}
\lambda^*_{\text{\tiny E}} \\ \lambda^*_{\text{\tiny N}} \\ \eta^*
\end{bmatrix}
=
\begin{bmatrix}
\hat{\xi}_{\text{\tiny E}}^1 & \hat{\xi}_{\text{\tiny N}}^1 & 1 \\
\hat{\xi}_{\text{\tiny E}}^2 & \hat{\xi}_{\text{\tiny N}}^2 & 1 \\
\hat{\xi}_{\text{\tiny E}}^3 & \hat{\xi}_{\text{\tiny N}}^3 & 1 \\
\end{bmatrix}^{-1}
\begin{bmatrix}
\cb^{\text{\tiny T}} \hat{\xb}^1 \\ \cb^{\text{\tiny T}} \hat{\xb}^2 \\ \cb^{\text{\tiny T}} \hat{\xb}^3
\end{bmatrix}.
\end{align*}
\end{small}
\end{proposition}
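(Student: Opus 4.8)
The plan is to treat \eqref{Modified_Master_1} for $\mathcal{T}=[3]$ as a small linear program in the only free variables $(\lambda_{\text{\tiny E}}, \lambda_{\text{\tiny N}}, \eta)$, and to identify its optimal vertex through LP duality together with the worst-case distribution already computed in the corollary following Lemma~\ref{lemma-2}.

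First I would strip away everything that does not depend on the decision variables. Since $(\hat{\zb}, \hat{\yb}, \hat{\xb}^{\ell})$ are fixed data inherited from the MISOCP \eqref{SLP_Primal}, the constant $\qb^{\text{\tiny T}} \hat{\zb} + \bb^{\text{\tiny T}} \hat{\yb}$ drops from the objective, and the feasibility block $\Ab \hat{\xb}^{\ell} + \Bb \hat{\zb} + \Cb \hat{\yb} \geq \db(\hat{\bm{\xi}}^{\ell})$ contains no free variable and holds by construction, hence is inactive in the optimization. What remains is
\begin{small}
\begin{align*}
\min_{\lambda_{\text{\tiny E}}, \lambda_{\text{\tiny N}}, \eta} \ & \mu_{\text{\tiny E}} \lambda_{\text{\tiny E}} + \mu_{\text{\tiny N}} \lambda_{\text{\tiny N}} + \eta \\
\mbox{s.t.} \ & \eta + \hat{\xi}_{\text{\tiny E}}^{\ell} \lambda_{\text{\tiny E}} + \hat{\xi}_{\text{\tiny N}}^{\ell} \lambda_{\text{\tiny N}} \geq \cb^{\text{\tiny T}} \hat{\xb}^{\ell}, \ \forall \ell \in [3].
\end{align*}
\end{small}

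Next I would take the LP dual, assigning a multiplier $p_{\ell} \geq 0$ to each of the three constraints. Matching coefficients of the free variables $\lambda_{\text{\tiny E}}, \lambda_{\text{\tiny N}}, \eta$ produces exactly the moment problem \eqref{PrimalLP} with $\mathcal{Q}(\zb, \yb, \hat{\bm{\xi}}^{\ell})$ replaced by the fixed value $\cb^{\text{\tiny T}} \hat{\xb}^{\ell}$; its feasible region is the set of convex weights $(p_1, p_2, p_3)$ that reproduce the mean $\mub$. By the corollary, this system has a unique solution $(p_1^*, p_2^*, p_3^*)$, and --- because we assume in Remark~\ref{remark-1} that $\mub$ lies in the interior of $\Xi^3$ --- every component satisfies $p_{\ell}^* > 0$. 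The key step is then complementary slackness: strict positivity $p_{\ell}^* > 0$ forces each corresponding primal inequality to hold with equality at the optimum, so all three constraints bind simultaneously. This yields the square linear system $\eta^* + \hat{\xi}_{\text{\tiny E}}^{\ell} \lambda_{\text{\tiny E}}^* + \hat{\xi}_{\text{\tiny N}}^{\ell} \lambda_{\text{\tiny N}}^* = \cb^{\text{\tiny T}} \hat{\xb}^{\ell}$ for $\ell \in [3]$, which in matrix form inverts to precisely the claimed expression. Invertibility is guaranteed because the extreme points $\hat{\bm{\xi}}^1, \hat{\bm{\xi}}^2, \hat{\bm{\xi}}^3$ of the nondegenerate triangle $\Xi^3$ are affinely independent, so the matrix with rows $(\hat{\xi}_{\text{\tiny E}}^{\ell}, \hat{\xi}_{\text{\tiny N}}^{\ell}, 1)$ is nonsingular.

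The main obstacle is really just recognizing the dual of the reduced LP as the already-solved moment problem \eqref{PrimalLP} and justifying that all three constraints bind; once strict positivity of $p^*$ is in hand from the interior-point assumption on $\mub$, the rest is a routine matrix inversion. A slightly more self-contained alternative, sidestepping an explicit duality invocation, would be to verify directly that the candidate $(\lambda_{\text{\tiny E}}^*, \lambda_{\text{\tiny N}}^*, \eta^*)$ defined by the inverse is feasible and that its objective value equals $\sum_{\ell=1}^3 p_{\ell}^* (\cb^{\text{\tiny T}} \hat{\xb}^{\ell})$, which matches the dual optimum and certifies optimality by weak duality.
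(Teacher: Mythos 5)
Your proposal is correct and follows essentially the same route as the paper's proof: both exploit the LP duality pairing between the moment problem \eqref{PrimalLP} (with $\mathcal{Q}$ values fixed at $\cb^{\text{\tiny T}} \hat{\xb}^{\ell}$) and the reduced problem in $(\lambda_{\text{\tiny E}}, \lambda_{\text{\tiny N}}, \eta)$, then invoke strict positivity of $p^*_{\ell}$ from Remark~\ref{remark-1} and complementary slackness to force all three constraints to bind, yielding the $3\times 3$ system. The only differences are cosmetic---you traverse the duality in the opposite direction (master problem to moment problem rather than the reverse) and add explicit justifications of the inactive feasibility block and of the matrix invertibility via affine independence, which the paper leaves implicit.
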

\begin{proof}
Given an optimal solution ($\hat{\zb}, \hat{\yb}, \hat{\xb}^{\ell}, \forall \ell \in [3]$), we can rewrite problem \eqref{SLP_Primal} as follows:

\vspace{-2mm}
\begin{small}
\begin{subequations}
\begin{align}
\qb^{\text{\tiny T}} \hat{\zb} + \bb^{\text{\tiny T}} \hat{\yb} + \max_{p \in \mathbb{R}^3_+} \ & \sum_{\ell=1}^3 p_{\ell} (\cb^{\text{\tiny T}} \hat{\xb}^{\ell})  \nonumber \\    
\mbox{s.t.} \ 
& \hat{\xi}_{\text{\tiny E}}^1 p_1 + \hat{\xi}_{\text{\tiny E}}^2 p_2 + \hat{\xi}_{\text{\tiny E}}^3 p_3 = \mu_{\text{\tiny E}}, \label{const-mue} \\
& \hat{\xi}_{\text{\tiny N}}^1 p_1 + \hat{\xi}_{\text{\tiny N}}^2 p_2 + \hat{\xi}_{\text{\tiny N}}^3 p_3 = \mu_{\text{\tiny N}}, \label{const-mun}\\
& p_1 + p_2 + p_3 = 1. \label{const-prob}
\end{align}
\end{subequations}
\end{small}
By taking the dual to the inner max problem, we obtain

\vspace{-2mm}
\begin{small}
\begin{align*}
\qb^{\text{\tiny T}} \hat{\zb} + \bb^{\text{\tiny T}} \hat{\yb} + \min \ & \mu_{\text{\tiny E}} \lambda_{\text{\tiny E}} + \mu_{\text{\tiny N}} \lambda_{\text{\tiny N}} + \eta   \\     
\mbox{s.t.} \ 
& \hat{\xi}_{\text{\tiny E}}^{\ell} \lambda_{\text{\tiny E}} + \hat{\xi}_{\text{\tiny N}}^{\ell} \lambda_{\text{\tiny N}} + \eta \geq \cb^{\text{\tiny T}} \hat{\xb}^{\ell}, \ \forall \ell \in [3],
\end{align*}
\end{small}
where $\lambda_{\text{\tiny E}}$, $\lambda_{\text{\tiny N}}$, and $\eta$ are dual variables for constraints \eqref{const-mue}, \eqref{const-mun}, and \eqref{const-prob}, respectively.
Suppose that $p^*_{\ell}, \ \forall \ell \in [3]$, and $(\lambda^*_{\text{\tiny E}}, \lambda^*_{\text{\tiny N}}, \eta^*)$ are primal and dual optimal solutions. Then the following complementary slackness condition should hold:

\vspace{-2mm}
\begin{align*}
(\hat{\xi}_{\text{\tiny E}}^k \lambda^*_{\text{\tiny E}} + \hat{\xi}_{\text{\tiny N}}^k \lambda^*_{\text{\tiny N}} + \eta^* - \cb^{\text{\tiny T}} \hat{\xb}^k) p^*_k = 0, \ \forall k \in [3].  
\end{align*}
By Remark \ref{remark-1}, we have $p^*_k > 0, \ \forall k \in [3]$, which leads to

\vspace{-2mm}

\begin{align*}
\hat{\xi}_{\text{\tiny E}}^k \lambda^*_{\text{\tiny E}} + \hat{\xi}_{\text{\tiny N}}^k \lambda^*_{\text{\tiny N}} + \eta^* = \cb^{\text{\tiny T}} \hat{\xb}^k, \ \forall k \in [3].  \hspace{3.22cm} \qedhere
\end{align*}
\end{proof}

We conclude this section by formally outlining the accelerated CCG algorithm in Algorithm \ref{algo:ModifiedCCG}.

\begin{algorithm}[!h]
    \small
	\caption{\small Accelerated CCG algorithm for solving problem \eqref{DROLB-Simple}}
	\label{algo:ModifiedCCG}
	\begin{algorithmic}[1]
	    \STATE Solve problem \eqref{SLP_Primal} with $\Xi^3 \subseteq \Xi$, and record an optimal solution ($\hat{\zb}$, $\hat{\yb}$, $\hat{\xb}^{\ell}, \forall \ell \in [3]$).
		\STATE Set LB=$\qb^{\text{\tiny T}} \hat{\zb} + \bb^{\text{\tiny T}} \hat{\yb}+ \sum_{\ell=1}^3 p^*_{\ell} ( \cb^{\text{\tiny T}} \hat{\xb}^{\ell})$, UB=$\infty$, $t=3$ and $\mathcal{T}=[3]$.
		\STATE Set ($\lambda_{\text{\tiny E}}^*, \lambda_{\text{\tiny N}}^*, \eta^*$) = closed-form solution in Proposition \ref{prop:closedform}.
        \STATE Solve the following problem:
		\begin{align*}
		\mathcal{Z}(\hat{\zb}, \hat{\yb},\lambda_{\text{\tiny E}}^*, \lambda_{\text{\tiny N}}^*) = \max_{\tilxi \in \Xi } \ \bigg\{ \mathcal{Q}(\hat{\zb}, \hat{\yb}, \tilxi) - \lambda_{\text{\tiny E}}^*  \tilxiE - \lambda_{\text{\tiny N}}^*  \tilxiN \bigg\}.
		\end{align*}		
		 -- Record an optimal $\tilxi^*$ and the optimal value $\mathcal{Z}(\hat{\zb}, \hat{\yb},\lambda_{\text{\tiny E}}^*, \lambda_{\text{\tiny N}}^*)$.
		\STATE Update UB by 
		\begin{align*}
		\text{UB} = \min \{ \text{UB}, \ \qb^{\text{\tiny T}} \hat{\zb} + \bb^{\text{\tiny T}} \hat{\yb} + \mu_{\text{\tiny E}} \lambda_{\text{\tiny E}}^* + \mu_{\text{\tiny N}} \lambda_{\text{\tiny N}}^* + \mathcal{Z}(\hat{\zb}, \hat{\yb},\lambda_{\text{\tiny E}}^*, \lambda_{\text{\tiny N}}^*) \}.
		\end{align*}
		\IF {$(\text{UB}-\text{LB})/\text{UB} \leq \epsilon$} 
		\STATE Stop and return $\hat{\zb},\hat{\yb}$ as an optimal solution.
		\ELSE 
		\STATE Update $\hat{\xi}_{\text{\tiny E}}^{t+1}=\tilde{\xi}_{\text{\tiny E}}^*$, $\hat{\xi}_{\text{\tiny N}}^{t+1}=\tilde{\xi}_{\text{\tiny N}}^*$, $\mathcal{T} = \mathcal{T} \cup \{t+1\}$ and $t=t+1$.
		\ENDIF			
		\STATE Update LB by solving the following master problem:
		\begin{subequations}\label{Algo_LB}
			\begin{align}
			\text{LB} = \min_{(\zb,\yb) \in \mathcal{S}} \ & \qb^{\text{\tiny T}} \zb + \bb^{\text{\tiny T}} \yb + \mu_{\text{\tiny E}} \lambda_{\text{\tiny E}} + \mu_{\text{\tiny N}} \lambda_{\text{\tiny N}} + \eta \\ 
			\mbox{ s.t. } \ 
			& \eta \geq \cb^{\text{\tiny T}} \xb^{\ell} - \lambda_{\text{\tiny E}} \hat{\xi}_{\text{\tiny E}}^{\ell} - \lambda_{\text{\tiny N}} \hat{\xi}_{\text{\tiny N}}^{\ell} , \ \forall \ell \in \mathcal{T}, \\
			& \Ab \xb^{\ell} + \Bb \zb + \Cb \yb \geq \db(\hat{\bm{\xi}}^{\ell}), \ \forall \ell \in \mathcal{T}.
			\end{align}
		\end{subequations}
		-- Record an optimal solution $\hat{\zb}, \hat{\yb}, \lambda_{\text{\tiny E}}^*, \lambda_{\text{\tiny N}}^*$, and $\eta^*$.
    \STATE Go to step 4.		
	\end{algorithmic}
\end{algorithm}

\subsection{Recovery of AC and GIC feasible solution}
\label{subsec:ACFeasible}
In this section we discuss how to obtain a feasible solution to the original DRO model \eqref{DRO-Simple} with nonlinear and nonconvex AC and GIC constraints. To this end, we assume that the binary decision vector $\hat{\zb}$ is given.
{\color{black}
For example, $\hat{\zb}$ can be chosen as an optimal solution to the relaxed DRO model \eqref{DROLB-Simple}. If the chosen $\hat{\zb}$ leads to the AC infeasibility when $\tilxi=\mub=0$ in \eqref{DRO-Simple:Reform1}, one will need to solve a new DRO model by a modified Algorithm \ref{algo:ModifiedCCG} where $\mathcal{S}$ in \eqref{Algo_LB} is replaced with $\mathcal{A}$ from \eqref{DRO-Simple}. The resulting optimization problem in  \eqref{Algo_LB} will be a nonconvex mixed-integer nonlinear program, which can be solved either using local branch-and-bound methods (such as in Juniper.jl \cite{kroger2018juniper}) or to global optimality at the cost of larger run times.
}

Given $\hat{\zb}$, the DRO model \eqref{DRO-Simple} can be rewritten as

\vspace{-2mm}
\begin{small}
\begin{align}
\qb^{\text{\tiny T}} \hat{\zb} + \min_{ \yb \in \mathcal{A}(\hat{\zb})} \ & \bb^{\text{\tiny T}} \yb + \mub^{\text{\tiny T}} \lambdab  + \bigg\{ \max_{\tilxi \in \Xi} \  \mathcal{V} (\hat{\zb}, \yb, \tilxi) -  \lambdab^{\text{\tiny T}} \tilxi  \bigg\},    
\label{DRO-Simple:Reform1}
\end{align}
\end{small}
where $\mathcal{A}(\hat{\zb})$ is a feasible region of the first-stage problem \eqref{DRO:stg1} given $\hat{\zb}$. Since $\mathcal{V} (\hat{\zb}, \yb, \tilxi)$ is the optimal value of the nonlinear program \eqref{DRO:stg2}, Lemma \ref{lemma-2} does not hold.
Therefore, it becomes even more challenging to solve the inner max problem in \eqref{DRO-Simple:Reform1}, which is required by the CCG algorithm.

Instead of solving the inner max problem in \eqref{DRO-Simple:Reform1} to obtain UB in Algorithm \ref{algo:ModifiedCCG}, we solve the following restricted inner max problem:

\vspace{-2mm}
\begin{small}
\begin{align}
\max_{\tilxi \in \Xi^{\text{\tiny S}}} \  \mathcal{V} (\hat{\zb}, \yb, \tilxi) -  \lambdab^{\text{\tiny T}} \tilxi ,
\label{restricted_inner_max}
\end{align}
\end{small}
where $\Xi^{\text{\tiny S}}$ is obtained by sampling the uncertain parameter $\tilxi$ from the support set $\Xi$. Note that as we increase the number of sampling points, the obtained solution converges to an optimal solution of the formulation \eqref{DRO-Simple:Reform1}.
With this approach, we can utilize the accelerated CCG algorithm to obtain AC feasible transmission grid operations $(\hat{\zb}, \yb(\hat{\zb}) ) \in \mathcal{A}$.

\section{Numerical Experiments} \label{sec:numerical}
We conduct numerical experiments to show (i) the computational performances of our solution approaches and (ii) the performances of distributionally robust transmission grid operations produced by our DRO model.
To this end, we first generate several instances as described in Section \ref{Instance_Description}. Next, in Section \ref{Computational_Performances}, we compare our new solution approaches with the classical CCG algorithm in \cite{ryu2020pscc}.
Then, in Section \ref{Performance_comparison}, we compare solutions obtained by our DRO approach with the other deterministic approaches.\\
\indent
All computations were performed on a personal laptop with 2.3 GHz Intel Core i7 CPU and 16 GB of memory.  
The relaxed DRO model \eqref{DROLB-Simple} and its corresponding algorithms were implemented in \texttt{C++} and solved by using \texttt{Gurobi v9.0.1}. 
Also, problem \eqref{DRO-Simple:Reform1} was solved by using \texttt{Ipopt v0.6.1}, which provides AC and GIC feasible solutions. 

\subsection{Instance description} \label{Instance_Description}
\subsubsection{Transmission grid systems}
We consider the widely-examined ``epri-21'' and ``uiuc-150'' systems, designed specifically for GMD studies. 
Figure \ref{fig:Transmission_Grids} shows simplified diagrams of the epri-21 system located near Atlanta, GA (top), and the uiuc-150 system located near Nashville, TN (bottom).
The epri-21 system has $19$ buses, $7$ generators, $15$ transmission lines, $16$ transformers, and $8$ substations.
The uiuc-150 system has $150$ buses, $27$ generators, $157$ transmission lines, $61$ transformers, and $98$ substations. 
In these diagrams, the blue and green lines are 500 kV and 345 kV transmission lines, respectively.
{\color{black} The DC model's input parameters of the epri-21 and the uiuc-150 systems are given in \cite{horton2012test} and \cite{birchfield2016statistical}, respectively, while the AC model's input parameters are available from \url{https://github.com/lanl-ansi/PowerModelsGMD.jl}.}
{\color{black} The remaining parameters for our DRO model are described in Table \ref{tab:parameters}.}

\begin{table}[h!]
    \centering
    \caption{Additional parameters.}
    \begin{tabular}{cccc}
    \hline
    $\kappa^{\text{\tiny l}}$ & $\kappa^{\text{\tiny s}}$ & $\overline{v}^{\text{\tiny d}}$ & $\overline{I}^{\text{\tiny eff}}_e : e_{ij} \in \mathcal{E}^{\tau}$ \\ \hline
    $\$ 50,000/pu$ & $\$ 100,000/pu$ & $ 10,000 [V]$ & $ 2\frac{\overline{s}_e}{\min(\underline{v}_i, \underline{v}_j)} \text{[Amp]}$  \\ \hline
    \end{tabular}
    \label{tab:parameters}
\end{table}


\begin{figure}[h]
    \centering
    \includegraphics[width=\linewidth]{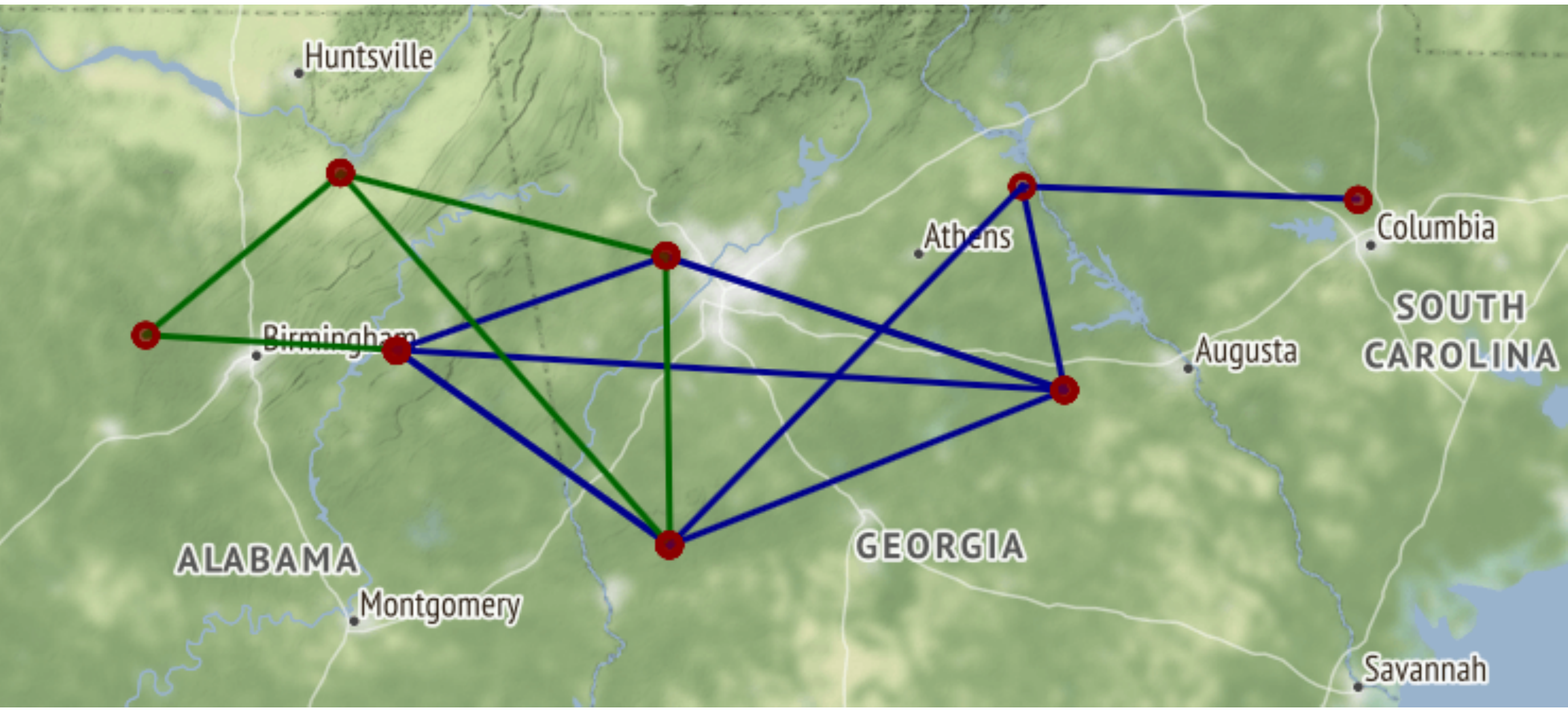}\\ \vspace{0.3cm}
    \includegraphics[width=\linewidth]{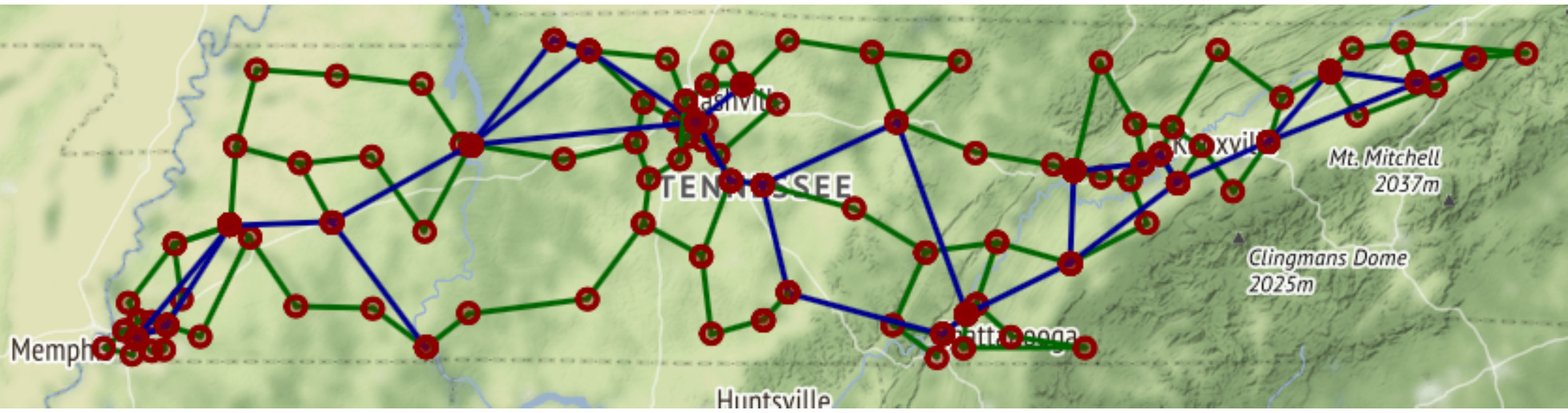}
    \caption{Benchmark cases for GMD studies: epri-21 (top) and uiuc-150 (bottom).}
    \label{fig:Transmission_Grids}
\end{figure}
\subsubsection{E-field data} \label{sec:E-field}
Recall that the inputs of the ambiguity set $\mathcal{D}$ in our DRO model are

\vspace{-2mm}
\begin{small}
\begin{align*}
& \mu_{\text{\tiny E}} = M \cos \delta^{\mu}, \ \
\mu_{\text{\tiny N}} = M \sin \delta^{\mu}, \ \
\Xi=\text{conv} \{ (\hat{\xi}^{\ell}_{\text{\tiny E}}, \hat{\xi}^{\ell}_{\text{\tiny N}}), \ \forall \ell \in [N] \},
\end{align*}
\end{small}
where $\hat{\xi}^{\ell}_{\text{\tiny E}} = R \cos \delta^{\ell}$ and $\hat{\xi}^{\ell}_{\text{\tiny N}} = R \sin \delta^{\ell}$.
Throughout this section we focus mainly on the \textit{pentagon} support set, whose $N=5$ extreme points are determined by $R$ and $(\delta^{\ell}, \forall \ell \in [5])=(0^{\circ},45^{\circ},90^{\circ},135^{\circ},180^{\circ})$ (see Figure \ref{fig:SupportSets} for an example). The reason for choosing the pentagon support set is explained in Section \ref{subsec:Ambiguity}. 
Moreover, based on historical data of the E-field (e.g., the maximum magnitude $R$ of the E-field in the Hydro-Qu\'ebec GMD event in 1989 was $8$ V/Km), we set various $R \in \{2.5, 5, 7.5, 10, 12.5\}$ [V/km].
For the mean values ($M$, $\delta^{\mu}$) of random magnitude $\tilde{M}$ and direction $\tilde{\delta}$ of E-field, respectively, we set $M \in \{0.5R, 0.7R\}$ [V/Km] and $\delta^{\mu} \in \{45^{\circ}, 67.5^{\circ}, 90^{\circ}, 112.5^{\circ}, 135^{\circ} \}$. 

In summary, for each transmission grid system (epri-21 or uiuc-150), we generate $50$ different instances based on the various $(R,M,\delta^{\mu})$ to test our solution approaches for the proposed DRO model.

\subsection{Computational Performances} \label{Computational_Performances}
In this section, we compare the computational performances of our solution approaches with those of the classical CCG algorithm. Specifically, using 50 instances described in Section \ref{Instance_Description}, we solve the relaxed DRO model \eqref{DROLB-Simple} (i) under the \textit{triangle} support set by the proposed MISOCP reformulation and classical CCG, and compare their computation times, and (ii) under the \textit{pentagon} support set by the proposed accelerated CCG (Algorithm \ref{algo:ModifiedCCG}) and classical CCG, and compare their computation times.

Figure \ref{fig:computation_triangle} shows computation times of the MISOCP reformulation and CCG that solve the relaxed DRO model \eqref{DROLB-Simple} under the triangle support set.
Each point under the 45 degree blue line represents an instance where the MISOCP reformulation computationally outperforms CCG.
Compared with CCG, the MISOCP reformulation is on average \textit{2.6} times faster for the epri-21 system and \textit{2.3} times faster for the uiuc-150 system.

Figure \ref{fig:computation_pentagon} shows computation times of the accelerated CCG and CCG that solve the relaxed DRO model \eqref{DROLB-Simple} under the pentagon support set.
On average, the accelerated CCG is \textit{2.6} times faster for the epri-21 system and \textit{2.4} times faster for the uiuc-150 system.

\begin{figure}[!h]
    \centering
    \includegraphics[scale=0.22]{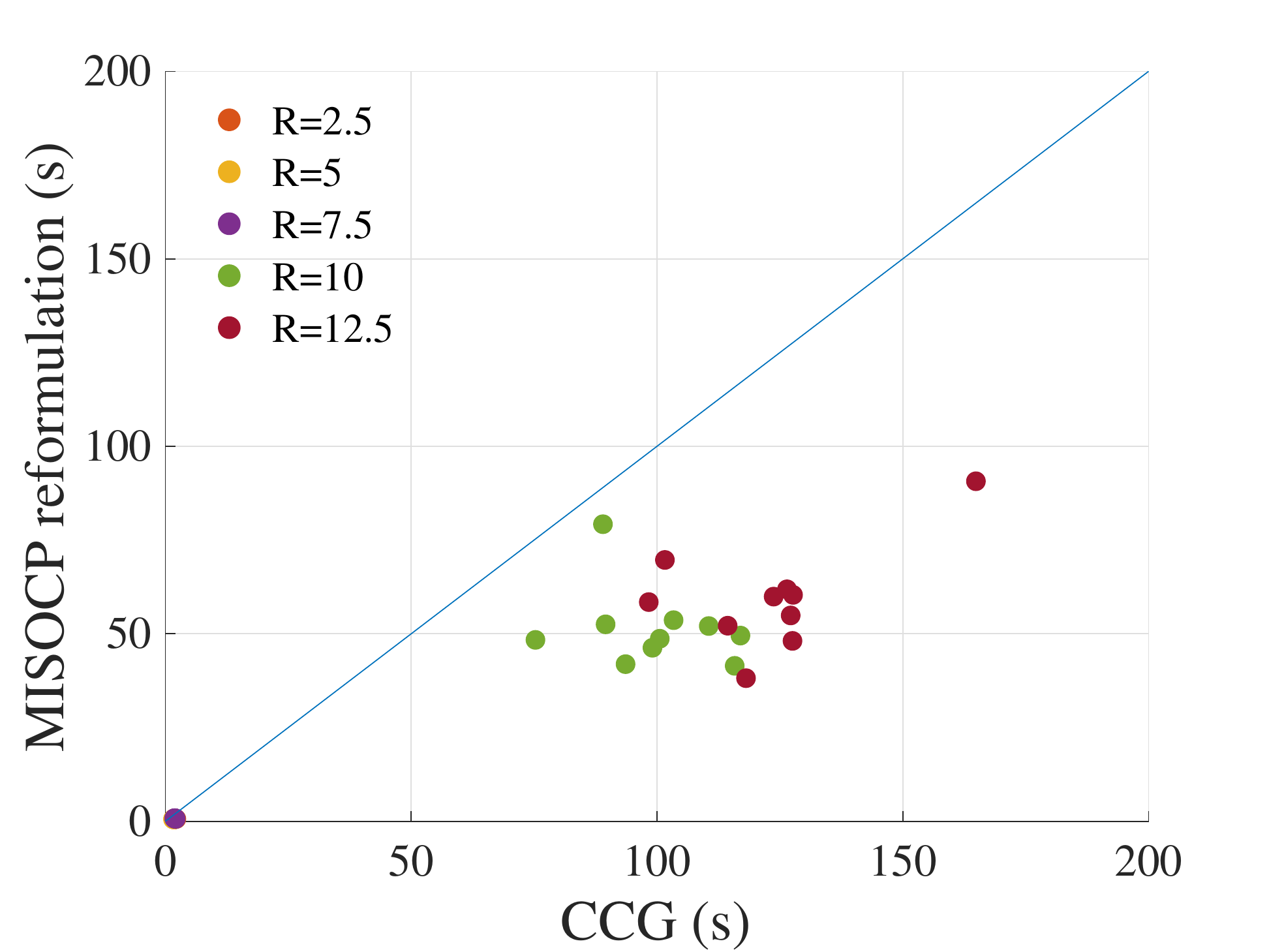}
    \includegraphics[scale=0.22]{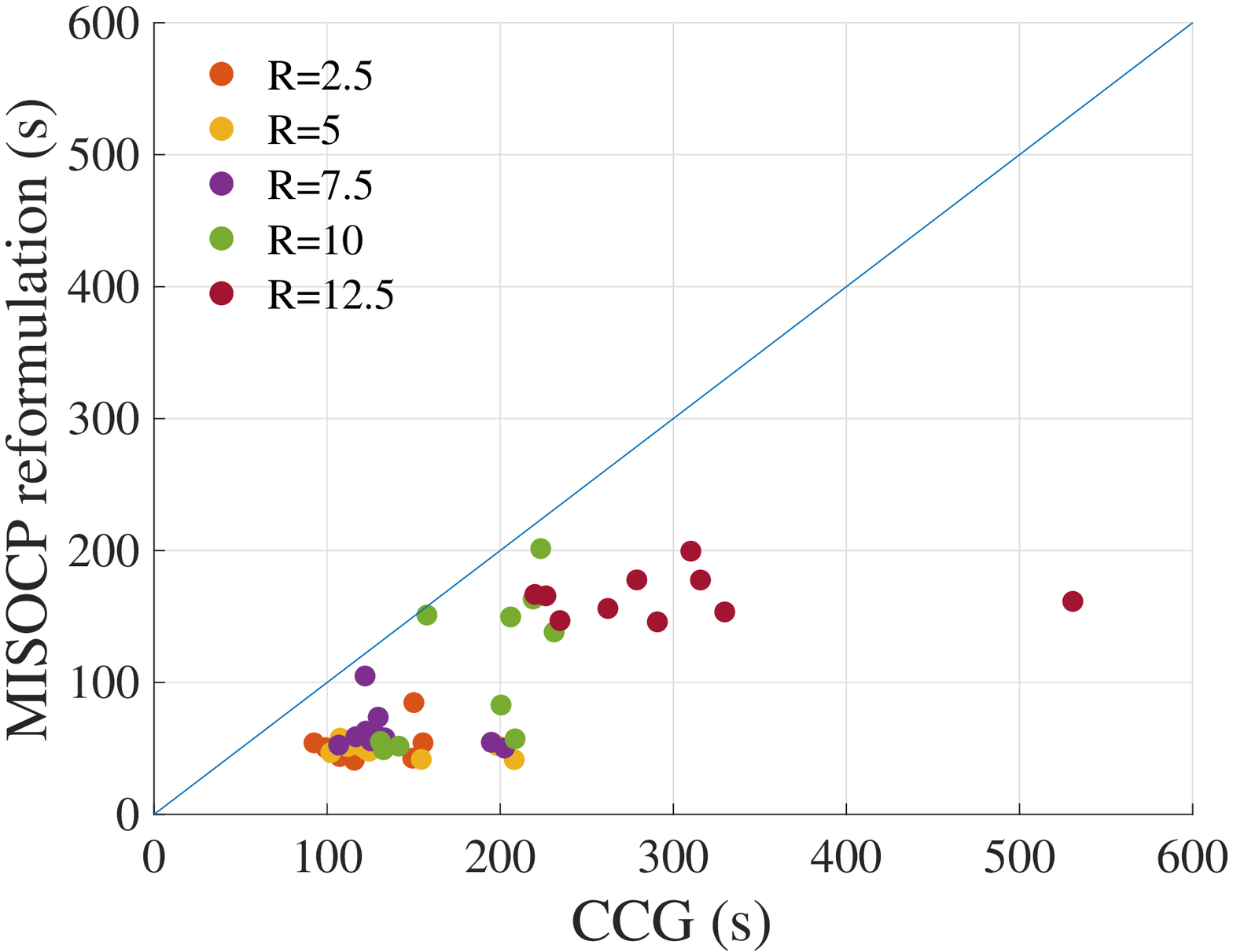}
    \caption{Computation (wall-clock) times of the MISOCP reformulation and CCG: epri-21 (left) and uiuc-150 (right).}
    \label{fig:computation_triangle}
\end{figure}

\begin{figure}[!h]
    \centering
    \includegraphics[scale=0.22]{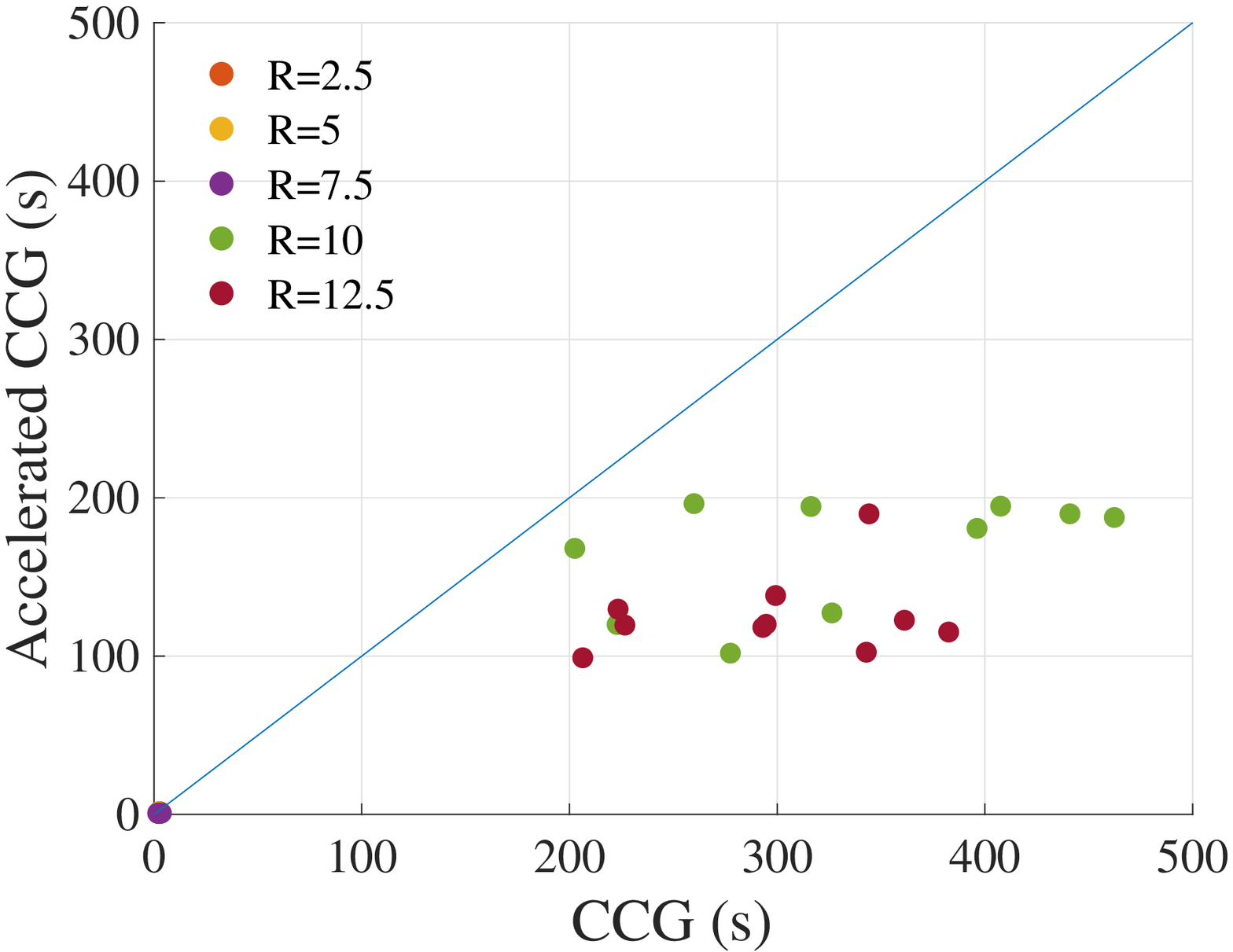}
    \includegraphics[scale=0.22]{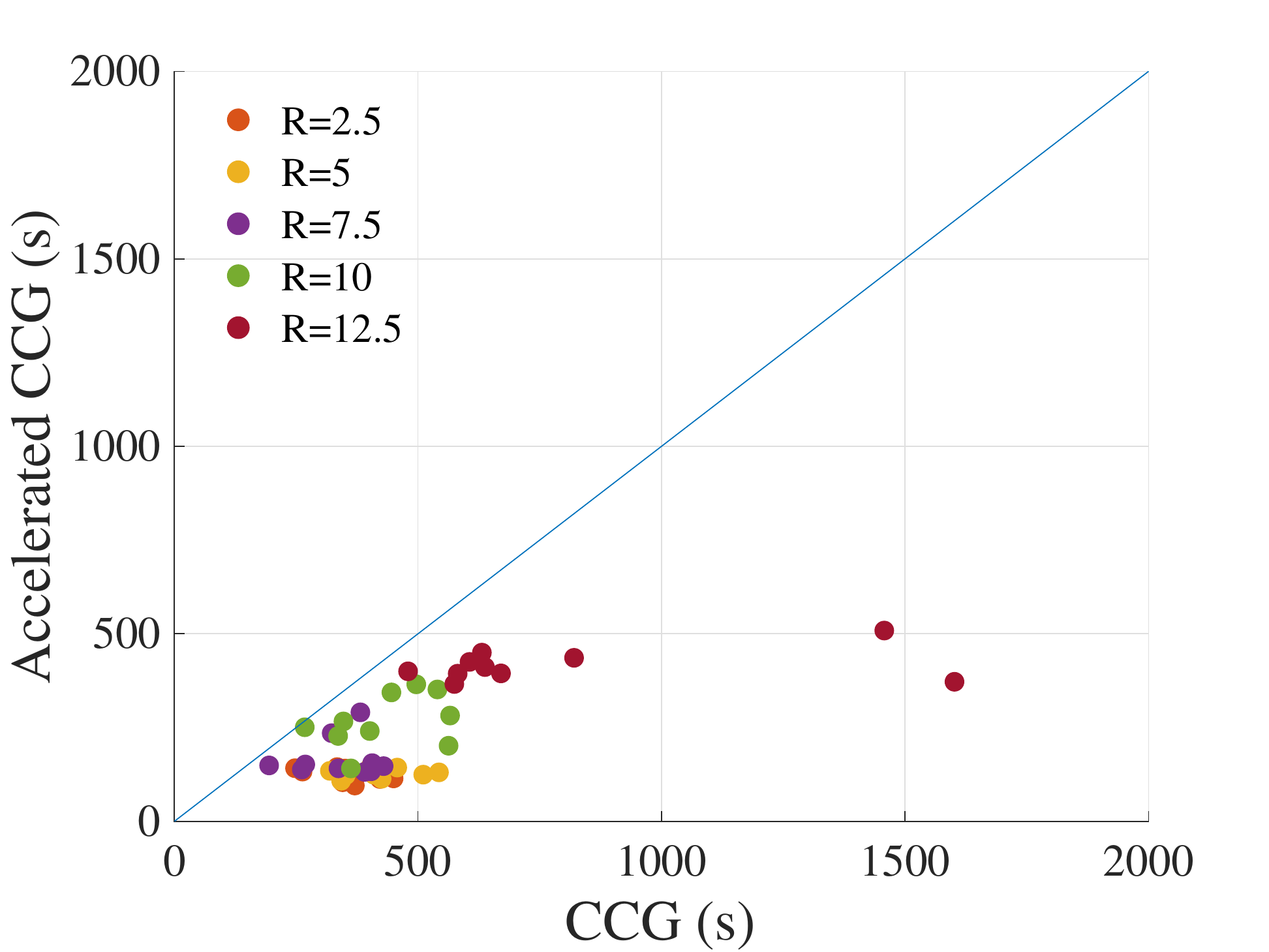}
    \caption{Computation (wall-clock) times of the accelerated CCG and CCG: epri-21 (left) and uiuc-150 (right).}
    \label{fig:computation_pentagon}
\end{figure}

\subsection{Performances of the distributionally robust transmission grid operations} \label{Performance_comparison}
In this section we show performances of AC and GIC feasible distributionally robust transmission grid operations produced by our approach compared with the transmission grid operations obtained by two other approaches.\\

\noindent
\textbf{Approach 1:} 
This is the proposed AC and GIC feasibility approach for the DRO model in this paper. Obtain switching decision $\hat{\zb}^{\text{\tiny A1}}$ for the relaxed DRO model \eqref{DROLB-Simple} under pentagon support set.
Solve problem \eqref{DRO-Simple:Reform1} to obtain feasible $\yb(\hat{\zb}^{\text{\tiny A1}})$. \\ 

 \noindent
 \textbf{Approach 2:} This is for only the mean value of uncertain E-fields (akin to deterministic version without uncertainty in \cite{lu2017optimal}). Fixing $\tilxi = \mub$, the relaxed DRO model \eqref{DROLB-Simple} reduces to
\begin{small}
\begin{align}
	\min_{(\zb, \yb) \in \mathcal{S}} \ & \qb^{\text{\tiny T}} \zb + \bb^{\text{\tiny T}} \yb  + \mathcal{Q} (\zb, \yb, \mub).       
	\label{Deterministic-Simple}
\end{align}
\end{small}
Solve problem \eqref{Deterministic-Simple} to obtain switching decision $\hat{\zb}^{\text{\tiny A2}}$, and solve problem \eqref{DRO-Simple:Reform1} to obtain feasible $\yb(\hat{\zb}^{\text{\tiny A2}})$. \\ 

\noindent
\textbf{Approach 3:} This is without performing any mitigation actions. Set $\hat{\zb}^{\text{\tiny A3}} = 1$, in other words, switching on all generators, transmission lines, and transformers, and solve problem \eqref{DRO-Simple:Reform1} to obtain feasible $\yb(\hat{\zb}^{\text{\tiny A3}})$. \\

\noindent
\textbf{Sampling for Problem \eqref{DRO-Simple:Reform1}: } As described in Section \ref{subsec:ACFeasible}, solving problem \eqref{DRO-Simple:Reform1} is challenging. Hence we utilize the restricted inner max problem \eqref{restricted_inner_max} with $\Xi^{\text{\tiny S}}$ to obtain an UB in Algorithm \ref{algo:ModifiedCCG}. As the cardinality of $\Xi^{\text{\tiny S}}$ gets larger, the obtained solution converges to an optimal solution of Problem \eqref{DRO-Simple:Reform1}. 
{\color{black} 
To see this numerically, we construct a sufficiently large support set $\widehat{\Xi}^{\text{\tiny S}}$ that is close to $\Xi$, and a smaller support set $\Xi^{\text{\tiny S}}$, i.e., $|\Xi^{\text{\tiny S}}| \leq |\widehat{\Xi}^{\text{\tiny S}}|$.
For given $\hat{\zb}$ and $\Xi^{\text{\tiny S}}$, we compute the AC and GIC feasible solution denoted by $\hat{\yb}$, $\hat{\lambdab}$, and $\hat{\eta}(\Xi^{\text{\tiny S}}) := \max_{\tilxi \in \Xi^{\text{\tiny S}}} \  \mathcal{V} (\hat{\zb}, \hat{\yb}, \tilxi) -  \hat{\lambdab}^{\text{\tiny T}} \tilxi$.
To check if $\Xi^{\text{\tiny S}}$ is appropriately chosen via sampling, we verify the following inequalities to hold true:
\begin{align}
\hat{\eta}(\Xi^{\text{\tiny S}}) \geq \mathcal{V} (\hat{\zb}, \hat{\yb}, \tilxi) - \hat{\lambdab}^{\text{\tiny T}} \tilxi, \ \forall \tilxi \in \widehat{\Xi}^{\text{\tiny S}}. \label{violation}
\end{align}
If the inequalities in \eqref{violation} are not violated, the solution $\textbf{g}(\Xi^{\text{\tiny S}}) := \big(\hat{\yb}, \hat{\lambdab}, \hat{\eta}(\Xi^{\text{\tiny S}}) \big)$ is also feasible to $\widehat{\Xi}^{\text{\tiny S}}$.
In Table \ref{tab:feasibility}, we display the number of violations on \eqref{violation} for various $\textbf{g}(\Xi^{\text{\tiny S}})$, which is computed by fixing $\hat{\zb} = \hat{\zb}^{\text{\tiny A1}}$, where $\hat{\zb}^{\text{\tiny A1}}$ is obtained by solving the epri-21 instance under the pentagon support set $\Xi$ with $R=10$, $M=0.5R$, and $\delta^{\mu}=45^{\circ}$.
Note that we uniformly sample $\tilxi$ from $\Xi$ to construct (i) $\widehat{\Xi}^{\text{\tiny S}}$ such that $|\widehat{\Xi}^{\text{\tiny S}}|=5,000$, and  (ii) $\Xi^{\text{\tiny S}}$ such that $|\Xi^{\text{\tiny S}}| \in \{1, 5, 10, 25, 50\}$. 
The results in Table \ref{tab:feasibility} show that the solution $\textbf{g}(\Xi^{\text{\tiny S}})$ with $|\Xi^{\text{\tiny S}}|=50$ is feasible to $\widehat{\Xi}^{\text{\tiny S}}$.
We observe that the similar results also hold for the other instances.
\begin{table}[h!]
    \centering
    \caption{Number of inequalities \eqref{violation} violated by $\textbf{g}(\Xi^{\text{\tiny S}})$ under various $\Xi^{\text{\tiny S}}$, when $|\widehat{\Xi}^{\text{\tiny S}}|=5,000$.}
    \begin{tabular}{|c|c|c|c|c|c|}
    \hline
    $|\Xi^{\text{\tiny S}}|$ & 1 & 5 & 10 & 25 & 50  \\ \hline
    Number of violations & 5000 & 1976 & 934 & 253 & 0 \\ \hline
    \end{tabular}
    \label{tab:feasibility}
\end{table}

}

To summarize the importance of the above-mentioned approaches, comparing Approaches 2 and 3 helps us understand the importance of controlling switching components in the context of mitigating GIC effects. 
Also, by comparing Approaches 1 and 2, one can understand the importance of distributionally robust transmission grid operations under GMD events with uncertain E-fields.
In the next two subsections, we compare these three approaches using epri-21 and uiuc-150 systems.

\subsubsection{The epri-21 system} \label{epri-21}
We focus on instances with $R \in \{2.5, 5, 7.5, 10, 12.5 \}$, $M=0.5R$, and $\delta^{\mu} = 45^{\circ}$.

First, in {\color{black} Table \ref{Table:Solution_epri21}} 
we show switching decisions $\hat{\zb}^{\text{\tiny A1}}$ and $\hat{\zb}^{\text{\tiny A2}}$ obtained by solving problems \eqref{DROLB-Simple} and \eqref{Deterministic-Simple}, respectively. 
Since Approach 1 hedges against the uncertain GMD events, the number of mitigation actions (via switching off network components) is larger in comparison with {\color{black}deterministic ones, i.e., Approach 2}.

\begin{table}[ht]
\centering
\caption{Switched off components for epri-21 system. All generators are activated for both Approaches 1 and 2.}
\begin{threeparttable}
\begin{tabular}{c|cc|cc}
\hline
 	& \multicolumn{2}{c|}{Approach 1} & \multicolumn{2}{c}{Approach 2} \\ 
 $R$ & Line & Transformer & Line & Transformer\\ \hline
$2.5$ & 3 & 18, 23, 29 & 2, 11 & 22 \\
$5$  & 3 & 18, 20, 23, 28 & 3 & 18, 21, 22, 28  \\
$7.5$  & 11 & 18, 21, 23, 28 & - & 28  \\
$10$  & 2, 8 & 22, 28 & 3, 8 & 23 \\
$12.5$  & 3 & 18, 19, 20, 29 & - & 21, 23 \\ \hline
\end{tabular}
\end{threeparttable}
\label{Table:Solution_epri21}
\end{table}

Next, we compare the three different switching decisions $\hat{\zb}^{\text{\tiny A1}}$, $\hat{\zb}^{\text{\tiny A2}}$, and $\hat{\zb}^{\text{\tiny A3}}$ by solving problem \eqref{DRO-Simple:Reform1} with respect to (i) costs in {\color{black}Table \ref{Table:Costs_epri21}}
and (ii) the total amounts of power loss (i.e., $\sum_{i \in \mathcal{N}}\sqrt{(\lpm_i)^2+(\lqm_i)^2}$), power load shed (i.e., $\sum_{i \in \mathcal{N}}\sqrt{(\lpp_i)^2+(\lqp_i)^2}$), and the additional reactive power loss due to GICs (i.e., $\sum_{i \in \mathcal{N}}d^{\text{\tiny qloss}}_i$) {\color{black}in Figure \ref{fig:epri21_LossComparison}}.

\begin{table}[ht]
	\centering
	\caption{Cost components for epri-21 system.}
	\begin{threeparttable}	
	\begin{tabular}{r|r|r|r|r|r|r}
	\hline
		& & $R=2.5$ & $R=5$ & $R=7.5$ & $R=10$ & $R=12.5$ \\ \hline	  
	\multirow{ 4}{*}{A1} & $C_1$ & 398.2K   & 398.2K & 398.2K & 398.2K & 398.3K  \\ 
	& $C_2$ & 0.1  & 0.0 & 0.1 & 0.1 & 0.0 \\ 
	& $C_3$ & 0.2  & 0.0 & 0.2 & 0.2 & 0.0 \\ 
	& $C_4$ & 398.2K  & 398.2K & 398.2K& 398.2K & 398.3K \\ \hline
	\multirow{ 4}{*}{A2}& $C_1$ & 398.2K   & 399.2K & 399.1K & 399.9K & 400.1K  \\ 
	& $C_2$& 0.1  & 148.6K & 596.2K & 800.1K & 1552.9K \\ 
	& $C_3$& 0.2  & 28.2K & 113.3K & 120.0K & 264.4K \\ 
	& $C_4$& 398.2K  & 576.0K & 1108.6K & 1320.0K & 2217.4K \\ \hline
	\multirow{ 4}{*}{A3}& $C_1$ & 398.2K   & 399.2K & 399.4K & 399.7K & 400.2K  \\ 
	& $C_2$& 0.0  & 343.2K & 1091.9K & 1825.8K & 2549.2K \\ 
	& $C_3$& 0.0  & 90.7K & 106.7K & 141.6K & 176.6K \\ 
	& $C_4$& 398.2K  & 833.1K & 1598.0K & 2367.1K &3126.0K \\ \hline
	\end{tabular}	
	\begin{tablenotes}\scriptsize    
	\item[] $C_1$: generation cost; $C_2$, penalty for load shedding, $C_3$, worst-case expected damage due to GICs, $C_4:$ total costs.
	\end{tablenotes}
	\end{threeparttable}
	\label{Table:Costs_epri21}
\end{table}

\begin{figure}[!h]
    \centering
    \includegraphics[scale=0.22]{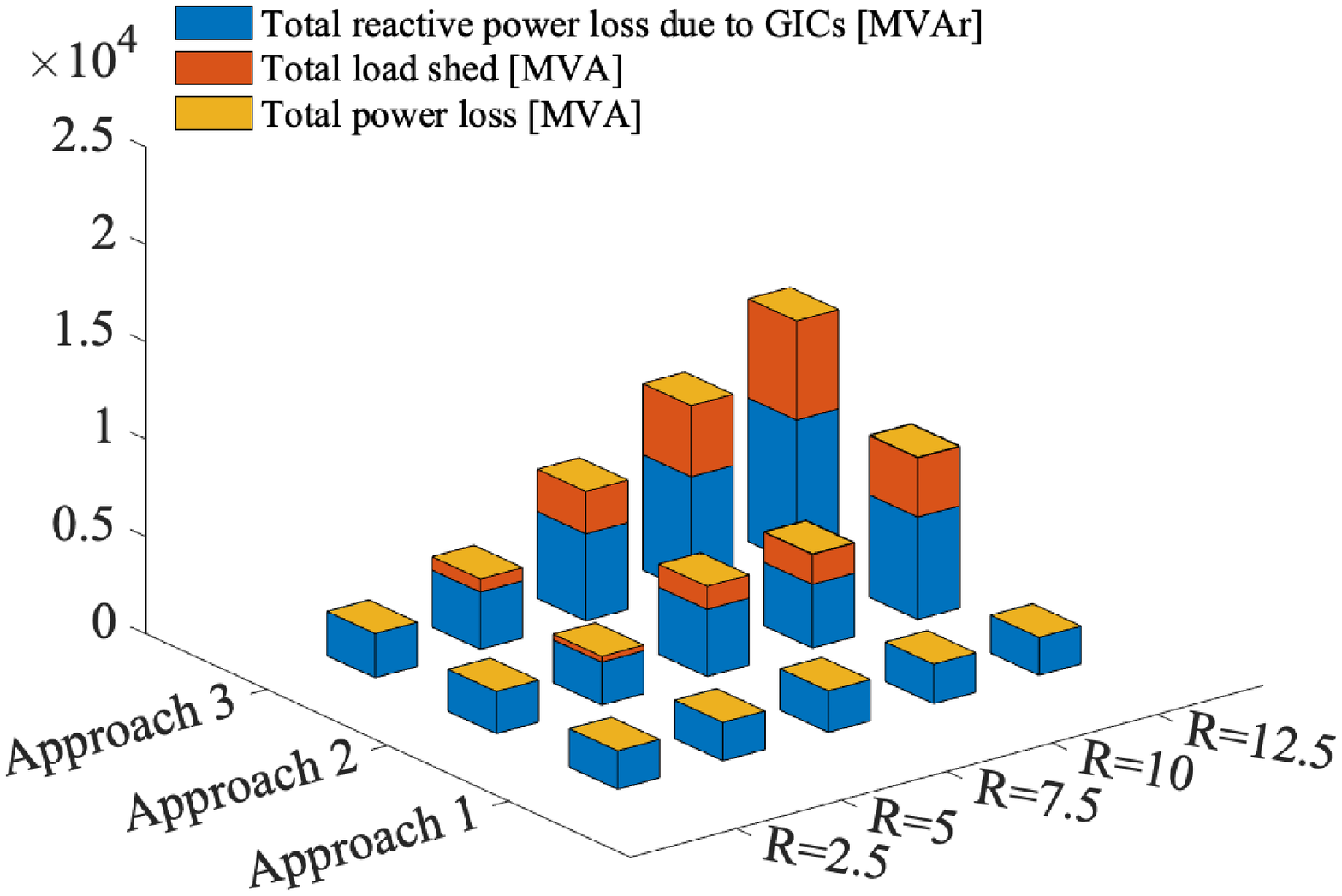}
    \includegraphics[scale=0.22]{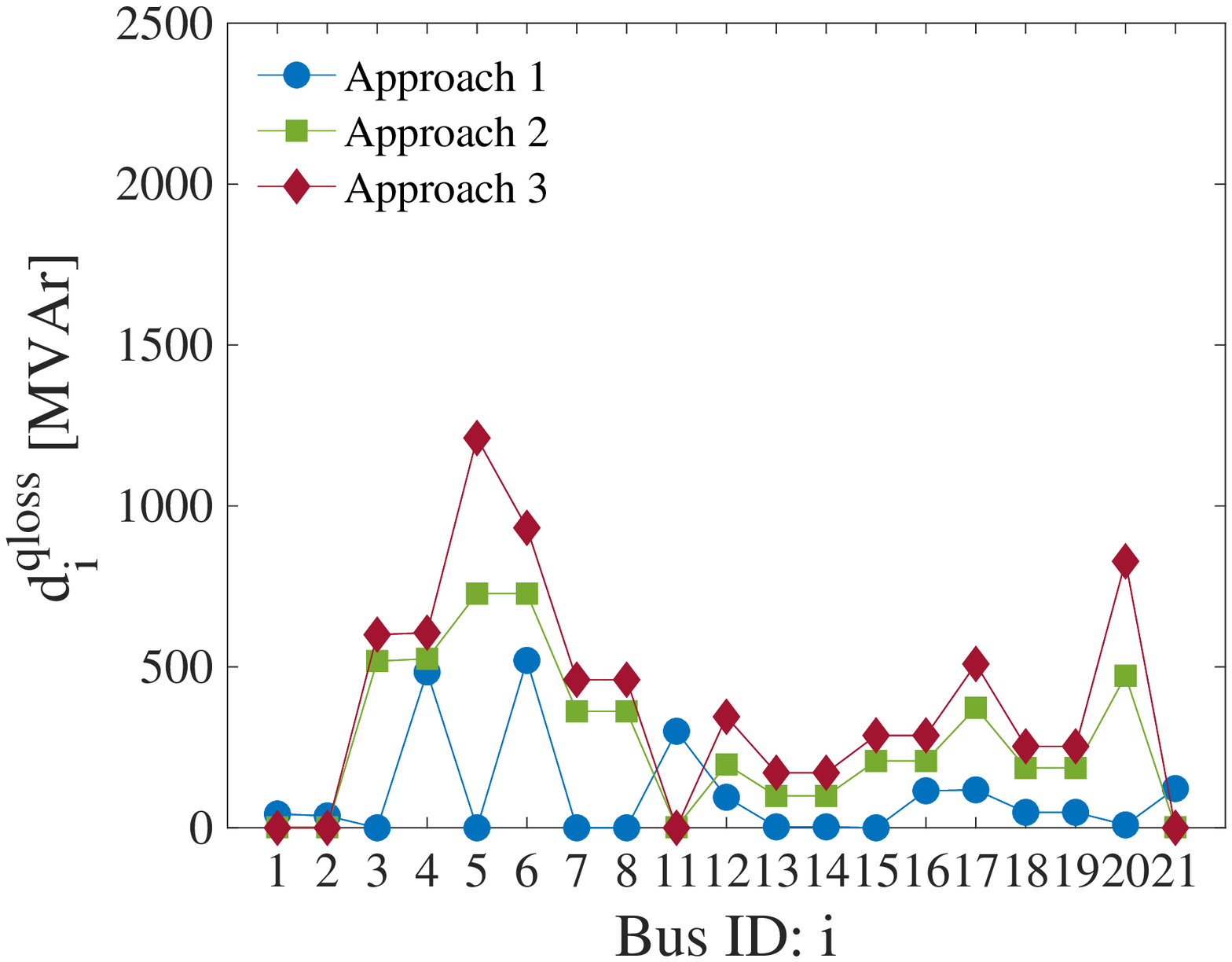}
    \caption{Comparison of three approaches based on total reactive power loss due to GICs, power loss, and load shed (left), and $d^{\text{\tiny qloss}}$ for each bus when $R=12.5$ (right).}
    \label{fig:epri21_LossComparison}
\end{figure}

In {\color{black} Table \ref{Table:Costs_epri21}},
as expected, we observe that the total costs obtained by Approach 3 dramatically increase as $R$ increases.
With $(\hat{\zb}^{\text{\tiny A2}}, \yb(\hat{\zb}^{\text{\tiny A2}}))$, we can reduce the total costs, but still need to pay some penalty costs {\color{black}(see $C_2$ and $C_3$ for A2 in Table \ref{Table:Costs_epri21})}. 
Fortunately, with the distributionally robust transmission grid operations $(\hat{\zb}^{\text{\tiny A1}}, \yb(\hat{\zb}^{\text{\tiny A1}}))$ obtained by Approach 1, we can avoid paying not only the penalty for load shedding, but also the worst-case expected damage due to GICs.
This result implies that the switching decisions obtained by the relaxed DRO model \eqref{DROLB-Simple} can lead to reliable and feasible grid operations.

In Figure \ref{fig:epri21_LossComparison} (left), we observe that the total load shed and the additional reactive power losses due to GICs ($d^{\text{\tiny qloss}}$) obtained by Approach 3 increase, as $R$ increases.
Without any mitigation actions, with all lines and transformers switched on, $d^{\text{\tiny qloss}}$ increases as $R$ increases, and thus the total load shed also increases in the system.
This negative GIC effect can be mitigated by controlling the switching components, as in Approaches 1 and 2.
With Approach 1, however, we can further reduce reactive power losses in the system.
For example, when $R=12.5$V/km, $d^{\text{\tiny qloss}}$ obtained by Approach 1 is lower than $d^{\text{\tiny qloss}}$ obtained by Approaches 2 and 3, as depicted in Figure \ref{fig:epri21_LossComparison} (right).
This observation explains why our proposed methods in Approach 1 leads to lower costs and a more reliable grid w.r.t \textit{significantly smaller} load sheds and reactive power losses with related voltage instability issues, compared with those obtained by Approaches 2 and 3.

\subsubsection{The uiuc-150 system} \label{uiuc-150}
We focus on instances with $R \in \{2.5, 5, 7.5, 10, 12.5 \}$, $M=0.5R$, and $\delta^{\mu} = 135^{\circ}$.

{\color{black}Table \ref{Table:Solution_uiuc150}} shows switching decisions ($\hat{\zb}^{\text{\tiny A1}}$, $\hat{\zb}^{\text{\tiny A2}}$) and their total costs obtained by solving problems \eqref{DROLB-Simple} and \eqref{Deterministic-Simple}, respectively.
Compared with $\hat{\zb}^{\text{\tiny A2}}$, the switching decision $\hat{\zb}^{\text{\tiny A1}}$ in Approach 1 suggests switching off more transmission lines as $R$ increases, in order to hedge against the uncertainty. 

Next, we compare the performances of switching decisions ($\hat{\zb}^{\text{\tiny A1}}$, $\hat{\zb}^{\text{\tiny A2}}$, and $\hat{\zb}^{\text{\tiny A3}}$) by solving problem \eqref{DRO-Simple:Reform1} as described in Section \ref{Performance_comparison}.
{\color{black}In Table \ref{Table:Costs_uiuc150}, we compare three approaches with respect to costs.}
When $R \in \{2.5, 5, 7.5\}$, the total costs obtained by the three approaches are close because of the similarity of $\hat{\zb}^{\text{\tiny A2}}$, $\hat{\zb}^{\text{\tiny A2}}$, and $\hat{\zb}^{\text{\tiny A3}}$.
In contrast, when $R \in \{10, 12.5\}$ (larger E-field magnitudes), we observe that $\hat{\zb}^1$ can effectively reduce the total cost.
Especially when $R=12.5$V/km, $\hat{\zb}^{\text{\tiny A1}}$ can prevent shedding power load and achieve reliable operations by switching off a few transmission lines and transformers.
While Approach 3 sheds a significant amount of power load in advance to prepare for future severe GMD events due to lack of mitigation actions, Approach 2 sheds relatively less power load since $\hat{\zb}^{\text{\tiny A2}}$ is a switching decision that prepares for a specific GMD event, namely, when $\tilxi=\mub$ (mean value). 

\begin{table}[!h]
	\centering
	\caption{Switched off components for uiuc-150 system. All generators are activated for both Approaches 1 and 2.}
	\begin{threeparttable}	
	\begin{tabular}{c|cc|cc}
	\hline
		 & \multicolumn{2}{c|}{Approach 1} & \multicolumn{2}{c}{Approach 2} \\ 
	 $R$ & Line & Transformer & Line & Transformer\\ \hline
	$2.5$ & 122 & - & 4,84,109 & - \\  \hline
	$5$  & \makecell{22,50,92,122,124,146} & - & 23,42,92 & - \\ \hline
	$7.5$  & 55,86 & 180 & 59,92 & -  \\ \hline
	$10$  & \makecell{24,28,50,53,85,\\109,122,124,155} & - & \makecell{13,28,\\67,86,92}& - \\ \hline
	$12.5$  & \makecell{22,39,43,44,50,\\59,67,81,121,122,\\128,129,150,151} & 187 & \makecell{28,59,\\67,79,92} & 187,215 \\ \hline
	\end{tabular}
	\end{threeparttable}
	\label{Table:Solution_uiuc150}
\end{table}

\begin{table}[ht]
	\centering
	\caption{Cost components for uiuc-150 system.}
	\begin{threeparttable}	
	\begin{tabular}{r|r|r|r|r|r|r}
	\hline
		 & & $R=2.5$ & $R=5$ & $R=7.5$ & $R=10$ & $R=12.5$ \\ \hline
	\multirow{ 4}{*}{A1} & $C_1$ & 801.2K   & 812.7K & 898.9K & 805.4K & 784.2K  \\ 
	 & $C_2$& 0.8  & 8.7K & 17.6K & 50.2K & 167.7K \\ 
	 & $C_3$& 1.3  & 1.3 & 0.0 & 110.0K & 1002.3K \\ 
	 & $C_4$& 801.2K  & 821.4K & 916.5K & 965.6K & 1954.2K \\ \hline
	\multirow{ 4}{*}{A2} & $C_1$ & 797.0K   & 830.1K & 927.5K & 775.2K & 643.8K  \\ 			
	 & $C_2$& 0.8  & 0.0 & 6.0K & 118.4K & 3666.6K \\ 
	 & $C_3$& 1.3  & 0.0 & 1.3 & 220.6K & 1221.9K \\ 
	 & $C_4$& 797.0K & 830.1K & 933.5K & 1114.2K & 5532.3K \\ \hline 
	\multirow{ 4}{*}{A3} & $C_1$ & 803.5K   & 826.3K & 909.3K & 772.4K & 484.5K  \\ 		
	 & $C_2$& 0.8  & 0.8 & 12.9K & 123.1K & 14239.0K \\ 
	 & $C_3$& 1.3  & 1.3 & 1.3 & 210.3K & 656.5K \\ 
	 & $C_4$& 803.5K  & 826.3K & 922.2K & 1105.8K & 15380.0K \\ \hline
	\end{tabular}		
	\begin{tablenotes}\scriptsize    
	\item[] $C_1$: generation cost; $C_2$, penalty for load shedding, $C_3$, worst-case expected damage due to GICs, $C_4:$ total costs.
	\end{tablenotes}
	\end{threeparttable}
	\label{Table:Costs_uiuc150}
\end{table}

{\color{black}
\subsection{DRO versus Stochastic Programming approach}
In this section we compare the DRO approach (i.e., Approach 1) with a stochastic programming (SP) approach via the out-of-sample simulation.
To this end, we consider the epri-21 system and generate $N^{\text{train}}=100$ training samples $\{ \widehat{\bm{\xi}}^{\ell} =(\widehat{M}^{\ell} \cos \widehat{\delta}^{\ell}, \widehat{M}^{\ell} \sin \widehat{\delta}^{\ell}) \}_{\ell=1}^{N^{\text{train}}}$, where $\widehat{M}^{\ell}$ and $\widehat{\delta}^{\ell}$ are uniformly extracted from $[0,10]$V/km and $[0^{\circ},180^{\circ}]$, respectively. Similarly, we generate $N^{\text{test}}=1000$ testing samples for the out-of-sample simulation.
\subsubsection{Switching decisions}
First, we use the relaxed DRO model \eqref{DROLB-Simple} to find a DRO switching decision $\widehat{z}^{\text{A1}}$. 
The pentagon support set described in Section \ref{sec:E-field} is constructed with the following parameters: 
$R=10$,
$\mu_{\text{\tiny E}} = \widehat{M}^{\text{avg}} \cos \widehat{\delta}^{\text{avg}}$, and
$\mu_{\text{\tiny N}} = \widehat{M}^{\text{avg}} \sin \widehat{\delta}^{\text{avg}}$, 
where $\widehat{M}^{\text{avg}}$ and $\widehat{\delta}^{\text{avg}}$ are the average of the training samples.
Second, we solve the following SP model to find a SP switching decision $\widehat{z}^{\text{SP}}$:

\vspace{-2mm}
\begin{small}
	\begin{align}
	\min_{(\zb, \yb) \in \mathcal{S}} \ & \qb^{\text{\tiny T}} \zb + \bb^{\text{\tiny T}} \yb  + \frac{1}{N^{\text{train}}} \sum_{\ell=1}^{N^{\text{train}}} \mathcal{Q} (\zb, \yb, \widehat{\bm{\xi}}^{\ell}).
	\label{SP-Simple}
	\end{align}
\end{small}	
We present switching decisions obtained from both DRO and SP approaches in Table \ref{Table:Solution_DRO_SP}. 

\begin{table}[ht]
	\centering
	\caption{{\color{black} Switched off components for the epri-21 system while comparing DRO vs. SP approaches. All generators are activated for both approaches.}}
	\begin{threeparttable}	
	\begin{tabular}{c|cc|cc}
	\hline
		 & \multicolumn{2}{c|}{DRO} & \multicolumn{2}{c}{SP} \\ 
	 & Line & Transformer & Line & Transformer\\ \hline	
	epri-21 & - & 21,22 & 11 & 29 \\ \hline	
	\end{tabular}
	\end{threeparttable}
	\label{Table:Solution_DRO_SP}
\end{table}

\subsubsection{AC power flow solution}
Recall that both \eqref{DROLB-Simple} and \eqref{SP-Simple} are MISOCP models because the feasible region $\mathcal{S}$ contains SOC constraints and binary variables (i.e., switching decisions).
We modify the models by replacing $\mathcal{S}$ with the nonconvex feasible region of $\mathcal{A}$ from \eqref{DRO-Simple} and fixing the switching decisions $\widehat{z}^{\text{A1}}$ and $\widehat{z}^{\text{SP}}$ obtained from the relaxed models to obtain AC power flow solutions (i.e., $\widehat{y}^{\text{A1-AC}}$ and $\widehat{y}^{\text{SP-AC}}$, respectively).
Note that the values of slack variables ($ \lpp, \lqp, \lpm, \lpp$) measure the AC feasibility. We have indeed verified that both $\widehat{y}^{\text{A1-AC}}$ and
$\widehat{y}^{\text{SP-AC}}$ are AC feasible.

\subsubsection{Out-of-sample simulations}
We aim to compare the out-of-sample performance of the solution $(\widehat{z}^{\phi},\widehat{y}^{\text{$\phi$-AC}})$, where $\phi \in \{\text{A1}, \text{A3}, \text{SP}\}$. Note that the solution obtained by Approach 3 is also considered to emphasize the importance of making switching decisions via the DRO and SP approaches.
To this end, we compute the average damage cost due to GICs using the out-of-sample scenarios $N^{\text{test}}$, namely, 
\begin{align}
& O^{\phi}:= \frac{1}{N^{\text{test}}} \sum_{\ell = 1}^{N^{\text{test}}} \mathcal{V}(\widehat{z}^{\phi},\widehat{y}^{\text{$\phi$-AC}} , \widehat{\bm{\xi}}^{\ell} )  	\nonumber
\end{align}
where $\mathcal{V}$ is defined in \eqref{DRO-Simple}. 
With $N^{\text{test}}=1000$, we obtain $O^{\text{A1}}=114.68$ and $O^{\text{SP}}=600.57$, which are much smaller than $O^{\text{A3}}=12004.23$ (i.e., the average damage cost due to GICs when all components are switched on).
This result demonstrates that making switching decision via the DRO approach can significantly reduce the average damage cost due to GICs under the out-of-sample scenarios.
}

\section{Conclusions} \label{sec:conclusion}
In this paper, we proposed a DRO model that finds nonlinear AC and GIC feasible transmission grid operations that mitigate potential negative impacts of uncertain E-fields due to GMDs. The proposed DRO model is solvable by using the standard CCG algorithm on small-scale grids. To improve the computational performance of CCG, we proposed an \textit{accelerated} CCG that takes advantage of the MISOCP reformulation of the DRO model under the triangle support set.
We numerically showed the run-time efficacy of this reformulation. Furthermore, we provided a detailed case study on the epri-21 and uiuc-150 systems that analyzed the effects of modeling uncertain GMDs.
Compared with the other existing stochastic programming approaches, out-of-sample testing suggests that our DRO approach can provide cheaper feasible and reliable transmission grid operations that reduce the amounts of load shedding and reactive power loss due to GICs, thus saving significant amounts of operating costs. 

{\color{black} An assumption in this paper was that the induced E-field due to the GMD event was spatially uniform within the entire electric grid. However, considering the spatial scales for large-scale grids and localized magnetic-storm disturbances, we plan to generalize the proposed two-stage DRO approach to account for non-uniform, uncertain E-fields on the GIC-induced voltages sources in out future work.} \\  

\noindent
\textbf{Acknowledgments} This work was supported by the U.S. Department of Energy's LDRD programs at Los Alamos National Laboratory under (a) \emph{``20170047DR: Impacts of Extreme Space Weather Events on Power Grid Infrastructure: Physics-Based Modelling of Geomagnetically-Induced Currents (GICs) During Carrington-Class Geomagnetic Storms"} and (b) \emph{``LDRD 20190590ECR: Discrete Optimization Algorithms for Provably Optimal Quantum Circuit Design"}.
The material is also based in part upon work supported by the U.S Department of Energy, Office of Science, under contract DE-AC-06CH11357.

\bibliographystyle{IEEEtran}
\bibliography{references.bib}

\end{document}